\newtheorem{thm}{Theorem}[section]
\newtheorem{cor}[thm]{Corollary}
\newtheorem{lem}[thm]{Lemma}
\theoremstyle{definition}
\newtheorem{defin}[thm]{Definition}
\newtheorem{rem}[thm]{Remark}
\newtheorem{exa}[thm]{Example}
\numberwithin{equation}{section}
\author[A. Itaba]{Ayako Itaba}
\address{
Department of Mathematics, 
Faculty of Science, Tokyo University of Science\\
1-3 Kagurazaka, Shinjyuku-ku, Tokyo, 162-8601, Japan}
\email{itaba@rs.tus.ac.jp}
\author[I. Mori]{Izuru Mori}
\address{
Department of Mathematics, 
Graduate School of Science, 
Shizuoka University\\
836 Ohya, Suruga-ku, Shizuoka, 422-8529, Japan}
\email{mori.izuru@shizuoka.ac.jp}
\keywords{Quantum polynomial algebras, 
          Geometric algebras, 
          Quantum projective planes, 
          Calabi-Yau algebras. }
\subjclass[2020]{16W50, 16S37, 16D90, 16E65.}
\newcommand{\Hom}{{\rm Hom}}
\newcommand{\Ext}{{\rm Ext}}
\newcommand{\bbP}{\mathbb{P}^2 }
\newcommand{\al}{\alpha}
\newcommand{\be}{\beta}
\newcommand{\ga}{\gamma}
\newcommand{\de}{\delta}
\newcommand{\ep}{\varepsilon}
\def\Spec{\mathsf{Spec}}
\def\Proj{\mathsf{Proj}}
\def\Specn{\mathsf{Spec}_{{\rm nc}}}
\def\Projn{\mathsf{Proj}_{{\rm nc}}}
\def\a{\alpha}
\def\b{\beta}
\def\c{\gamma}
\def\d{\delta}
\def\s{\sigma}
\def\t{\tau}
\def\l{\lambda}
\def\cA{\mathcal A}
\def\cO{\mathcal O}
\def\cV{\mathcal V}
\def\NN{\mathbb N}
\def\PP{\mathbb P}
\def\ZZ{\mathbb Z}
\def\mod{\mathsf{mod}}
\def\grmod{\mathsf{grmod}}
\def\tors{\mathsf{tors}}
\def\tails{\mathsf{tails}}
\def\Aut{{\rm Aut}}
\def\PGL{{\rm PGL}}
\def\id{{\rm id}}
\def\<{\langle}
\def\>{\rangle}
\begin{document}

\title
{Quantum Projective Planes Finite over their Centers} 
\begin{abstract}
For a $3$-dimensional quantum polynomial algebra $A=\mathcal{A}(E,\sigma)$, 
Artin-Tate-Van den Bergh showed that $A$ is finite over its center if and only if $|\sigma|<\infty$.  Moreover, Artin showed that if $A$ is finite over its center and $E\neq \PP^2$, then $A$ has a fat point module, which plays an important role in noncommutative algebraic geometry, however the converse is not true in general.  In this paper, we will show that, if $E\neq \PP^2$, then $A$ has a fat point module if and only if the quantum projective plane $\Projn A$ is finite over its center in the sense of this paper if and only if $|\nu^*\s^3|<\infty$ where $\nu$ is the Nakayama automorphism of $A$.  In particular, we will show that if the second Hessian of $E$ is zero, then $A$ has no fat point module. 
\end{abstract}
\maketitle

\section{Introduction}

A quantum polynomial algebra is a noncommutative analogue of a commutative polynomial algebra, and a quantum projective space is the noncommutative projective scheme associated to a quantum polynomial algebra, so they are the most basic objects to study in noncommutative algebraic geometry.  In fact, the starting point of the subject noncommutative algebraic geometry is the paper \cite{ATV1} by Artin-Tate-Van den Bergh, showing that there exists a nice correspondence between $3$-dimensional quantum polynomial algebras $A$ and geometric pairs $(E, \s)$ where $E=\PP^2$ or a cubic divisor in $\PP^2$, and $\s\in \Aut E$, so the classification of $3$-dimensional quantum polynomial algebras reduces to the classification of ``regular'' geometric pairs.  Write $A=\cA(E, \s)$ for a $3$-dimensional quantum polynomial algebra corresponding to the geometric pair $(E, \s)$.  The geometric property of the geometric pair $(E, \s)$ provides some algebraic property of $A=\cA(E, \s)$.  One of the most striking result of such is in the companion paper \cite{ATV2}.

\begin{thm}
[{\cite[Theorem 7.1]{ATV2}}]  
Let $A=\cA(E, \s)$ be a $3$-dimensional quantum polynomial algebra.  Then 
$|\s|<\infty$ if and only if $A$ is finite over its center. 
\end{thm}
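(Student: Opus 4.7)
The plan is to exploit the canonical surjection $\pi \colon A \twoheadrightarrow B$ onto the twisted homogeneous coordinate ring $B = B(E, \mathcal{L}, \s)$; when $E \ne \PP^2$ the kernel is generated by a normal regular element $g \in A_3$, and the case $E = \PP^2$ reduces essentially to the polynomial ring and can be handled directly. The guiding principle is that $Z(A)$ is controlled by $\s$-invariant sections of powers of $\mathcal{L}$, so that the two finiteness statements become two manifestations of the same geometric condition.

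For the forward direction, suppose $|\s| = n < \infty$. The multiplication in the Veronese $B^{(n)} = \bigoplus_k B_{nk}$ involves the factor $(\s^{nk})^* = \id$, so $B^{(n)}$ is the ordinary section ring $\bigoplus_k H^0(E, \mathcal{L}^{\otimes nk})$, which is commutative, and $B$ is finite as a $B^{(n)}$-module. On the $A$-side the normalising automorphism of $g$ is controlled by $\s$ and so has finite order; hence some power $g^m$ is central. Lifting a set of generators of $B^{(n)}$ over itself to homogeneous elements of $A$ that commute pairwise modulo $g$, and adjoining $g^m$, produces a commutative graded subalgebra $C \subseteq Z(A)$ over which $A$ is finitely generated as a module. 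The essentially mechanical but delicate step is arranging the lifts so that the commutation holds in $A$ itself, not merely modulo $g$; this is where one passes to a further power of $g^m$ to absorb obstructions.

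For the converse, suppose $A$ is finite over $Z(A)$. Then $A$ is a prime Noetherian PI algebra with $\mathrm{GKdim}\, A = 3$, and the quotient $B = A/(g)$ is again PI. Because $B$ is built directly from $(E, \mathcal{L}, \s)$, a uniform bound on its PI degree forces a uniform bound on the lengths of $\s$-orbits in $E$: the simple $B$-modules at closed points of $\Proj B$ correspond to $\s$-orbits, and their dimensions are bounded by the PI degree. On each irreducible component of $E$ one then has infinitely many points whose $\s^d$-orbits have length one for a fixed $d$, so $\s^d$ fixes that component pointwise; since $\s$ permutes the finitely many irreducible components of $E$, a further power of $\s$ is the identity.

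The main obstacle is the converse: centres do not transfer cleanly along quotients by a normal nonzerodivisor, so one cannot directly compare $Z(A)$ with $Z(B)$. The workaround is to argue through PI degree rather than through the centre itself, detecting the dynamics of $\s$ via the theory of point modules developed in \cite{ATV1}. A related subtlety, felt in both directions, is that $g$ is normal but not central; the arithmetic of its normalising automorphism forces one to pass to a power, which is ultimately the source of the exponent $3$ appearing in refinements such as the condition $|\nu^*\s^3|<\infty$ mentioned in the abstract.
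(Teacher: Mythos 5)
This theorem is not proved in the paper at all: it is quoted verbatim as \cite[Theorem 7.1]{ATV2}, once in the introduction and once in the preliminaries, and every subsequent use of it is by citation. There is therefore no in-paper argument to measure your proposal against; the only meaningful comparison is with the original proof of Artin--Tate--Van den Bergh, whose overall architecture your sketch does reproduce faithfully: the normal element $g\in A_3$ with $A/(g)$ the twisted homogeneous coordinate ring $B=B(E,\mathcal{L},\s)$, commutativity of the Veronese $B^{(n)}$ when $\s^{n}=\id$, and a PI-degree bound on simple modules for the converse.

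Judged as a proof rather than a roadmap, however, the forward direction has a genuine gap. The step of lifting generators of $B^{(n)}$ to homogeneous elements of $A$ that commute ``modulo $g$'' and then upgrading this to an honest commutative subalgebra of $Z(A)$ is precisely the hard content of \cite[Theorem 7.1]{ATV2}, and ``passing to a further power of $g^{m}$ to absorb obstructions'' does not accomplish it: an element that is central modulo $g$ need not become central after any such adjustment, and nothing guarantees that the chosen lifts land in the center, nor that $A$ is finite over the subalgebra they generate. In ATV2 this is handled by a separate, substantial construction of central elements using the normality of $g$ and the structure of $B$, not by a formal absorption argument. The converse direction is essentially correct (and could be streamlined: once all $\s$-orbit lengths are bounded by the PI degree $N$, one has $\s^{N!}=\id$ outright, with no component-by-component density argument needed). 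Your closing observation about the exponent $3$ and the Nakayama twist is apt: that refinement, the equivalence of $\|\s\|<\infty$ with $|\nu^{*}\s^{3}|<\infty$ and with $\Projn A$ being finite over its center, is exactly what the present paper establishes, taking the theorem you are trying to prove as an external black box.
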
 

Let $A=\cA(E, \s)$ be a $3$-dimensional quantum polynomial algebra.  To prove the above theorem, fat points of a quantum projective plane $\Projn A$ plays an essential role.  By Artin \cite {A}, 
if $A$ is finite over its center and $E\neq \mathbb{P}^{2}$, 
then $\Projn A$ has a fat point, however, the converse is not true.  To check the existence of a fat point, there is more important notion than $|\s|$, namely, 
$$\|\sigma \|:={\rm inf}\{i\in \mathbb{N}^{+} \mid 
 \sigma^{i}=\phi|_{E} \text{ for some } \phi\in {\rm Aut}\mathbb{P}^2\}.$$ 
In fact,  $\Projn A$ has a fat point if and only if $1<\|\s\|<\infty$ by \cite {A}.

In \cite{Mo2}, the notion that $\Projn A$ is finite over its center was introduced, and the following result was proved. 

\begin{thm}[{\cite[Theorem 4.17]{Mo2}}]
 Let $A=\cA(E, \s)$ be a $3$-dimensional quantum polynomial algebra such that $E\subset \PP^2$ is a triangle.  Then 
$\|\s\|<\infty$ if and only if $\Projn A$ is finite over its center. 
\end{thm}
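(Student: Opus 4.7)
The plan is to reduce the statement to the Artin--Tate--Van den Bergh theorem (Theorem~1.1) via the theory of graded Zhang twists. The crucial facts are: (i) a Zhang twist of $A = \cA(E, \s)$ by a graded algebra automorphism induced from $\phi \in \Aut \PP^2$ is again a $3$-dimensional quantum polynomial algebra whose geometric automorphism is obtained from $\s$ by composing with the restriction $\phi|_E \in \Aut E$; (ii) $\Projn A$ is invariant under graded Zhang twists, so such twists produce different algebras with the same quantum projective plane; (iii) the notion ``$\Projn A$ is finite over its center'' introduced in \cite{Mo2} should be, or be shown to be equivalent to, the existence of some Zhang twist $B$ of $A$ that is finite over its center in the classical sense.

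Granting (iii), the equivalence becomes a purely group-theoretic question about $\s \in \Aut E$ and the subgroup $H := (\Aut \PP^2)|_E \subset \Aut E$: the condition $\|\s\| < \infty$ says that $\s^i \in H$ for some $i \ge 1$, whereas the condition that some twist is classically finite over its center says there exists $\phi|_E \in H$ such that $\s \phi|_E$ has finite order in $\Aut E$. For the direction $(\Leftarrow)$, if $(\s \phi|_E)^n = \id$, one uses the explicit description of $\Aut E$ for a triangle (essentially three copies of $\PGL_2$ glued at the three nodes, together with an $S_3$ that permutes the lines) to rearrange the product and conclude $\s^n \in H$. For the direction $(\Rightarrow)$, given $\s^n \in H$, one must exhibit $\phi \in \Aut \PP^2$ such that $\s \phi|_E$ has finite order, which reduces to a root-extraction problem in $H$; since $H$ is the stabilizer of the triangle in $\PGL_3$, namely a group isomorphic to $(k^\times)^2 \rtimes S_3$, and $k$ is algebraically closed, the required roots exist and one can solve the problem component by component on each line.

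The triangle hypothesis is essential because it renders both $\Aut E$ and $H$ explicitly computable, so the entire bookkeeping can be handled line by line. For a general smooth cubic $E$ the group $\Aut E$ is only one-dimensional (translations on the elliptic curve plus finitely many automorphisms fixing a point) and the subgroup $H$ sits inside it in a much more delicate way, which is precisely why the present paper must replace $\s$ by the Nakayama-twisted invariant $\nu^*\s^3$ to obtain the analogous result in general.

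The main obstacle I anticipate is the translation step (iii), namely matching the categorical definition of ``$\Projn A$ is finite over its center'' from \cite{Mo2} with the concrete algebraic condition that some Zhang twist of $A$ is classically finite over its center; once this identification is made, the remaining work is the group theory on $\Aut E$ that the triangle structure makes tractable. A secondary difficulty is the root-extraction step in direction $(\Rightarrow)$, where one must carefully handle the non-commutation between $\s$ and elements of $H$ when expanding $(\s \phi|_E)^n$, but the explicit abelian-by-finite structure of $H$ in the triangle case is exactly what lets one succeed.
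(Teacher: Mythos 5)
Your route is genuinely different from the one this paper uses, and it contains one real gap. For contrast: the paper does not reprove the statement from scratch for a general Type S algebra; it first replaces $A$ by a Calabi--Yau algebra $A'$ with $\grmod A\cong\grmod A'$ (Theorem \ref{Main2}), reads off from the explicit normal forms in Table 1 (Lemma \ref{SP}) that $\|\s'\|=|\a^3|=|{\s'}^3|$ for Types S$_1$ and S$_3$ --- so that for the Calabi--Yau representative $\|\s'\|<\infty$ iff $|\s'|<\infty$ iff $A'$ \emph{itself} is finite over its center by Theorem \ref{thm_{ATV2}}, and no further twist has to be hunted for --- and handles the converse purely through the graded Morita invariance of $\|\cdot\|$ (Lemma \ref{lem_Mo2}). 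Your forward direction ($\|\s\|<\infty\Rightarrow\Projn A$ finite over its center) can be made to work along the lines you describe: writing the component-stabilizing part of $\Aut E$ as $(k^\times)^3$ and $H\cap(k^\times)^3$ as the codimension-one subtorus coming from diagonal matrices, the hypothesis $\s^n\in H$ says exactly that the ``defect'' $\l_1\l_3\l_2^{-1}$ of $\s$ is a root of unity, and then $\tau=\zeta\s^{-1}|_{\text{torus part}}$ with suitable roots of unity $\zeta_i$ lies in $H$ and makes $\tau\s$ of finite order. But you must additionally check that this $\tau$ is induced by a graded algebra \emph{automorphism of $A$} (not merely an automorphism of $\PP^2$ preserving $E$): only then does the Zhang twist exist and satisfy $\Projn A^\psi\cong\Projn A$. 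For Type S this does hold, since diagonal coordinate changes and the cyclic permutation visibly preserve the Type S relations, but it is a hypothesis-specific verification, not a formal consequence of $\tau\in\Aut_k(\PP^2,E)$.

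The genuine gap is in your step (iii) and hence in the converse direction. Definition \ref{def_{Z(proj)}} quantifies over \emph{all} quantum polynomial algebras $A'$ with $\Projn A'\cong\Projn A$; by Zhang's theorem together with Lemma \ref{lem.AOU} these are the twists of $A$ by \emph{twisting systems}, a strictly larger class than the algebraic twists $A^\psi$ with $\psi\in\Aut A$ induced from $\Aut_k\PP^2$. For a general twisting system there is no single $\phi|_E\in H$ with $\s'=\s\phi|_E$, so the identity you want to ``rearrange'' --- $(\s\phi|_E)^n=\id\Rightarrow\s^n\in H$ --- is simply not available for the $A'$ that Definition \ref{def_{Z(proj)}} hands you. (Even in the algebraic case the rearrangement is not formal, since $H$ is not normal in $\Aut E$; it requires the S$_1$/S$_3$ case split.) The repair is precisely Lemma \ref{lem_Mo2}: $\|\s\|$ is a graded Morita invariant, so $\Projn A\cong\Projn A'$ with $A'=\cA(E',\s')$ finite over its center gives $\|\s\|=\|\s'\|\le|\s'|<\infty$ directly by Theorem \ref{thm_{ATV2}} --- and once that lemma is invoked, the twist formalism plays no role in this direction at all. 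Without it, or without proving that every graded Morita equivalence between Type S algebras is realized by a chain of algebraic twists, the converse implication in your proposal does not close.
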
 

The purpose of this paper is to extend the above theorem to all $3$-dimensional quantum polynomial algebras.  In fact, the following is our main result.    

\begin{thm} [Theorem \ref{q.nu}, Corollary \ref{cor.main}]  
Let $A=\cA(E, \s)$ be a $3$-dimensional quantum polynomial algebra such that $E\neq \PP^2$, and $\nu\in \Aut A$  the Nakayama automorphism of $A$.  Then 
the following are equivalent:
\begin{enumerate}[{\rm (1)}]
\item{} $|\nu^*\s^3|<\infty$. 
\item{} $\|\s\|<\infty$. 
\item{} $\Projn A$ is finite over its center.
\item{} $\Projn A$ has a fat piont. 
\end{enumerate} 
\end{thm}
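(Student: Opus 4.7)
The plan is to close a cycle of implications
$(1) \Rightarrow (2) \Rightarrow (4) \Rightarrow (3) \Rightarrow (1)$,
leveraging Artin's criterion for fat points and \cite[Theorem 4.17]{Mo2} for the triangle case. The implication $(2) \Rightarrow (4)$ is essentially Artin's theorem recalled in the introduction, once one disposes of the possibility $\|\s\|=1$; under the hypothesis $E \neq \PP^2$ and the regularity of the geometric pair $(E,\s)$, this degenerate subcase is straightforward to rule out (or to verify that all four conditions fail together).

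For $(1) \Rightarrow (2)$, the key observation is that the Nakayama automorphism $\nu$ of $A$ is a graded algebra automorphism, so its action on $A_1$ lifts to an element of $\PGL_3$; in particular $\nu^*$ lies in the subgroup $G \subseteq \Aut E$ of automorphisms of $E$ that extend to $\PP^2$, and $\|\s\|<\infty$ is precisely the statement that some power of $\s$ lies in $G$. Starting from $(\nu^*\s^3)^n = \id$ and using the commutation behavior between $\nu^*$ and $\s$ — which should come out of the explicit description of $\nu$ via the twisted superpotential of $A$ — I would rewrite the identity so as to place $\s^{3n}$ inside $G$, yielding $\|\s\| \le 3n$.

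For $(4) \Rightarrow (3)$ I would adapt the argument of \cite[Theorem 4.17]{Mo2}: a fat point module produces a large central subring of $A$ via a suitable norm construction on line bundles over $E$, and conversely the ``finite over its center'' property of $\Projn A$ is detected by the existence of such central elements. The work beyond the triangle case is to run through the remaining possibilities for $E$ — smooth elliptic, nodal or cuspidal cubic, conic plus a transverse line, triple line — and verify the construction in each. Finally $(3) \Rightarrow (1)$ should follow from identifying the center of a suitable Veronese of $A$ with a ring of $\nu^*\s^3$-invariants on $E$, so that finiteness over the center forces $|\nu^*\s^3|<\infty$.

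The main obstacle I anticipate is the case analysis underlying $(4) \Rightarrow (3)$ and $(3) \Rightarrow (1)$: one must genuinely extend \cite{Mo2} beyond triangles, and the central-element construction behaves differently on each type of cubic divisor. A secondary difficulty is pinning down the precise structural relationship between $\nu^*$ and $\s$ on $E$ that singles out $\nu^*\s^3$ as the canonical torsion invariant; the exponent $3$ should come from the natural degree in which central elements of $A$ appear (reflecting that $A$ has three generators and global dimension $3$), and the appearance of the Nakayama automorphism should account for the deviation of $A$ from being Calabi-Yau.
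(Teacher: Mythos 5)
The overall shape of your argument---a cycle of implications, with Artin's fat-point criterion (Lemma \ref{lem.general}) handling the equivalence of (2) and (4) once $\|\s\|=1$ is excluded by $E\neq\PP^2$---is consistent with the paper, and your observation that $\nu^*$ is induced by a linear map and hence lies in $\Aut_k(\PP^2,E)$ is correct. The genuine gap is the finiteness transfer $\|\s\|<\infty\Rightarrow|\nu^*\s^3|<\infty$, which is the real content of the theorem and which you need for $(3)\Rightarrow(1)$ (and implicitly for $(4)\Rightarrow(3)$). This is not formal: knowing that \emph{some} power $\s^i$ extends to $\PP^2$ does not bound the order of the specific automorphism $\nu^*\s^3$; one must know exactly which powers of $\s$ lie in $\Aut_k(\PP^2,E)$ for a regular pair, and that is a type-by-type geometric fact. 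Your proposed mechanisms do not supply it: the ``norm construction'' for $(4)\Rightarrow(3)$ amounts to re-proving a strengthened \cite{ATV2} on every cubic divisor, and identifying the center of a Veronese of $A$ for $(3)\Rightarrow(1)$ does not engage with Definition \ref{def_{Z(proj)}}, which quantifies over \emph{all} quantum polynomial algebras $A'$ with $\Projn A\cong\Projn A'$, not over Veroneses of $A$. Even your $(1)\Rightarrow(2)$ rests on an unestablished ``commutation behavior'' between $\nu^*$ and $\s$ needed to extract $\s^{3n}\in\Aut_k(\PP^2,E)$ from $(\nu^*\s^3)^n=\id$.

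The paper closes all of these gaps with two structural inputs absent from your plan: (i) every $3$-dimensional quantum polynomial algebra is graded Morita equivalent to a Calabi--Yau one (Theorem \ref{Main2}), and (ii) the algebra $\oA=\cA(E,\nu^*\s^3)$ is invariant under graded Morita equivalence (Lemma \ref{lem.ovl}), so $\cA(E,\nu^*\s^3)\cong\cA(E',{\s'}^3)$ for the Calabi--Yau representative $A'=\cA(E',\s')$, while $\|\s\|=\|\s'\|$ by Lemma \ref{lem_Mo2}. Everything then reduces to the single identity $\|\s'\|=|{\s'}^3|$ for Calabi--Yau algebras, verified case by case from the explicit classification in Table 1 (with the non-reduced types TL and WL handled via a normal element, Theorem \ref{thm.dec}, and the Weyl-algebra and Lie-algebra examples), and $(3)\Rightarrow(2)$ follows directly from Lemma \ref{lem_Mo2} and Theorem \ref{thm_{ATV2}}. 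This reduction also makes your commutation worry moot, since $\nu^*=\id$ for the representative. Without these inputs your case analysis would have to treat arbitrary $\nu$ and $\s$ simultaneously, and the key implication would remain unproved.
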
 

As a biproduct, we have the following corollary.  

\begin{cor} Let $A=\cA(E, \s)$ be a $3$-dimensional quantum polynomial algebra. If the second Hessian of $E$ is zero, 
then $A$ is never finite over its center. 
\end{cor}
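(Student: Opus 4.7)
The hypothesis involves the second Hessian of a cubic, so implicitly $E \neq \PP^2$. I plan to argue by contradiction, with the main theorem as the linchpin. Suppose $A$ were finite over its center. By the Artin-Tate-Van den Bergh theorem quoted in the introduction, $|\s| < \infty$; since $\s^{|\s|} = \id_E$ is visibly the restriction of $\id_{\PP^2}$, this gives $\|\s\| < \infty$. By the main theorem (which is available since $E \neq \PP^2$), $\Projn A$ then carries a fat point module, equivalently $|\nu^*\s^3| < \infty$. The corollary thus reduces to refuting $|\nu^*\s^3| < \infty$ under the assumption that the second Hessian of $E$ vanishes.

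To this end, I would invoke the classical fact that a ternary cubic $f$ satisfies $H(f) = 0$ if and only if $V(f)$ is a cone, i.e., a union of concurrent lines (possibly with multiplicities). Consequently, $H(H(f)) = 0$ forces either $E$ itself, or its Hessian cubic $V(H(f))$, to be such a concurrent-line configuration. In either case $E$ is extremely degenerate and falls into a short list of explicit types, for each of which the Artin-Tate-Van den Bergh correspondence makes the admissible $\s$ (those yielding a $3$-dimensional quantum polynomial algebra) completely explicit.

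The proof would then be completed by verifying, case by case, that for every such admissible $(E, \s)$ the automorphism $\nu^*\s^3 \in \Aut E$ has infinite order. I expect this case analysis to be the main obstacle: in each of the degenerate types one must identify the Nakayama automorphism $\nu$ of $A = \cA(E, \s)$ together with its induced action $\nu^*$ on $E$, then realize $\nu^*\s^3$ as a concrete translation in the natural $\mathbb{G}_a$ or $\mathbb{G}_m$ group structure on the smooth locus of $E$ and show, using the constraints on $\s$ imposed by the regularity of the pair, that this translation is necessarily of infinite order. This contradicts $|\nu^*\s^3| < \infty$ and establishes the corollary.
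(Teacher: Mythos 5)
Your overall skeleton is the same as the paper's: reduce the corollary to the claim that $\|\s\|=\infty$ (equivalently, by Theorem \ref{q.nu}, $|\nu^*\s^3|=\infty$) for every $3$-dimensional quantum polynomial algebra whose point scheme is a cubic with vanishing second Hessian, and identify those point schemes, via the classical ``Hessian vanishes iff cone'' criterion, as exactly Types T, T$'$, CC, TL, WL. The logical frame (if $A$ were finite over its center then $|\s|<\infty$, hence $\|\s\|\le|\s|<\infty$, contradicting $\|\s\|=\infty$) is sound; note, though, that your detour through fat points and $|\nu^*\s^3|$ is unnecessary, since $\|\s\|\le|\s|$ already yields the contradiction once $\|\s\|=\infty$ is known.

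The gap is that the decisive step --- the case-by-case verification that the relevant order is infinite --- is exactly the part you defer (``I expect this case analysis to be the main obstacle''), and the method you propose for it does not cover all the cases. For Types T, T$'$, CC your plan works and is essentially what the paper does, after first replacing $A$ by a graded Morita equivalent Calabi--Yau algebra via Theorem \ref{Main2}; that reduction makes $\nu=\id$ and brings everything down to the explicit normal forms of Table 1, whereas without it you would also have to compute $\nu^*$ type by type. But for Types TL and WL the point scheme is non-reduced (a triple line, resp.\ a double line plus a line): its reduced smooth locus carries no useful group structure, every automorphism of the underlying reduced configuration extends to $\PP^2$, and so the obstruction to $\s^i\in\Aut_k(\PP^2,E)$ lives entirely in the nilpotents and is not detected by realizing $\s$ as a translation. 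The paper handles these two types by a completely different mechanism: $A$ has a degree-one regular normal element, the quotient gives $\Projn A/(x)\cong\PP^1$ with no fat point, the localization $\Specn A[x^{-1}]_0$ is the Weyl algebra (resp.\ the enveloping algebra of the non-abelian two-dimensional Lie algebra) and has no fat point, hence $\Projn A$ has no fat point by Theorem \ref{thm.dec}, and then $\|\s\|=\infty$ follows from Lemma \ref{lem.general}. You would need to supply an argument of this kind (or some other device) for TL and WL before your proof is complete.
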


These results are important to study representation theory of the Beilinson algebra $\nabla A$, which is a typical example of a $2$-representation infinite algebra defined in \cite {HIO}.  This was the original motivation of the paper \cite {Mo2}.  
\section{Preliminaries}
Throughout this paper, we fix an algebraically closed field $k$ of characteristic $0$.  All algebras and (noncommutative) schemes are defined over $k$.  We further assume that all (graded) algebras 
are finitely generated (in degree $1$) over $k$, that is, algebras of the form $k\<x_1, \dots, x_n\>/I$ for some (homogeneous) ideal $I\lhd k\<x_1, \dots, x_n\>$ (where $\deg x_i=1$ for every $i=1, \dots, n$).   

\subsection{Geometric quantum polynomial algebras}

In this subsection, we define geometric algebras and quantum polynomial algebras.  

\begin{defin}[{\cite[Definition 4.3]{Mo1}}]
\label{geometric}
	A geometric pair $(E,\sigma)$ consists of a projective scheme 
        $E \subset \PP^{n-1}$
	and $\sigma \in {\rm Aut}_{k}\,E$.
	For a quadratic algebra $A=k\<x_1, \dots, x_n\>/I$ where $I\lhd k\<x_1, \dots, x_n\>$ is a homogeneous ideal generated by elements of degree 2, we define 
	$$
\mathcal{V}(I_2):=\{ (p,q)\in\mathbb{P}^{n-1}\times \PP^{n-1}
\,|\,f(p,q)=0\,\,{\rm for\,\,any\,\,} f \in I_2 \}.
$$
	\begin{enumerate}[{\rm (1)}]
		\item We say that {\it $A$ satisfies {\rm (G$1$)}} if there exists a geometric pair $(E,\sigma)$ such that
		$$
		\mathcal{V}(I_2)=\{ (p,\sigma(p)) \in \mathbb{P}^{n-1}\times \PP^{n-1}
		\,|\,p \in E \}.
		$$
		In this case, we write $\mathcal{P}(A)=(E,\sigma)$, 
                and call $E$ the {\it point scheme} of $A$. 
		\item We say that {\it $A$ satisfies {\rm (G$2$)}} if there exists a geometric pair $(E,\sigma)$ such that
		$$
		I_2=\{ f \in k\<x_1, \dots, x_n\>_2
		\,|\,f(p,\sigma(p))=0\,\,{\rm for\,\,any\,\,} p \in E \}.
		$$
		In this case, we write $A=\mathcal{A}(E,\sigma)$.
		\item A quadratic algebra $A$ is called {\it geometric} if $A$ satisfies both (G1) and (G2)
		with $A=\mathcal{A}(\mathcal{P}(A))$.
	\end{enumerate}	
\end{defin}

\begin{defin}  A right noetherian graded algebra $A$ is called a 
{\it $d$-dimensional quantum polynomial algebra} if 
\begin{enumerate}[{\rm (1)}]
\item{} $\operatorname{gldim} A=d$, 
\item{} $\Ext^i_A(k, A)\cong \begin{cases}  k & \textnormal { if } i=d, \\
0 & \textnormal { if }  i\neq d, \end{cases}$ 
and 
\item{} $H_A(t):=\sum_{i=0}^{\infty}(\dim _kA_i)t^i=(1-t)^{-d}$.
\end{enumerate}
\end{defin} 
 
Note that a $3$-dimensional quantum polynomial algebra 
is exactly the same as a $3$-dimensional quadratic AS-regular algebra, so we have the following result. 
\begin{thm}[\cite{ATV1}]
	\label{ATV1} Every $3$-dimensional quantum polynomial algebra is a geometric algebra where the point scheme is either $\PP^2$ or a cubic divisor in $\PP^2$. 
\end{thm}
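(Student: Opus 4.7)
The plan is to unpack the AS-regularity hypotheses into a matrix presentation of the defining relations, and then identify the point scheme as a determinantal cubic. From $\operatorname{gldim} A = 3$, the Gorenstein condition, and $H_A(t)=(1-t)^{-3}$, a standard Hilbert-series calculation shows that the minimal graded free resolution of the trivial module $k$ has shape
\begin{equation*}
0 \to A(-3) \to A(-2)^3 \to A(-1)^3 \to A \to k \to 0.
\end{equation*}
Hence $A = k\<x_1,x_2,x_3\>/I$ with $\dim_k I_2 = 3$. Writing the three defining relations as $f_i = \sum_{j} M_{ij}(x)\, x_j$ for a $3\times 3$ matrix $M(x)$ with entries in $k\<x\>_1$, one reads off directly
\begin{equation*}
\cV(I_2) = \{(p,q)\in\PP^2\times\PP^2 : M(p)q = 0\}.
\end{equation*}

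For a nonzero $q$ to exist we need $\det M(p)=0$, and since $\det M(x)$ is a form of degree $3$ the first-projection image of $\cV(I_2)$ is either all of $\PP^2$ (when $\det M\equiv 0$) or the cubic divisor $E:=V(\det M)\subset\PP^2$. Over a generic $p\in E$ the matrix $M(p)$ has corank one, yielding a unique $q=:\sigma(p)\in\PP^2$ with $M(p)q=0$, and hence a morphism $\sigma$ defined on a dense open subset of $E$. To promote $\sigma$ to an automorphism of $E$, I would pass to the parallel right factorization $f_i=\sum_j x_j N_{ij}(x)$ for a matching matrix $N(x)$; Gorenstein self-duality, equivalently AS-regularity of the opposite algebra $A^{\mathrm{op}}$, implies $\det N$ cuts out the same $E$, and the left kernel of $N(q)$ supplies the inverse of $\sigma$.

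The main obstacle is the condition (G2), namely that $I_2$ coincides with the full space $I'_2:=\{f\in k\<x\>_2 : f(p,\sigma(p))=0 \text{ for all } p\in E\}$. Since $I_2\subseteq I'_2$ is automatic and $\dim_k I_2=3$, it suffices to show $\dim_k I'_2 \le 3$, which I would do by a cohomological dimension count: restrict elements of $k\<x\>_2 \cong H^0(\PP^2,\cO(1))^{\otimes 2}$ to the graph $\Gamma_\sigma\cong E$, landing in $H^0(E,\cO_E(1)\otimes\sigma^*\cO_E(1))$, and show the resulting map has image of dimension at least $6$. In the cubic case this amounts to Riemann--Roch on each component of $E$ together with non-degeneracy of $\cO_E(1)$; for $E=\PP^2$ the bound is immediate. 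The delicate part is controlling $\sigma^*\cO_E(1)$ uniformly across the possible degenerations of the cubic (smooth elliptic curve, nodal or cuspidal cubic, conic plus line, triangle, etc.), which forces the case analysis underlying the ATV classification.
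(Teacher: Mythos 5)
The paper offers no proof of this statement at all: it is quoted from Artin--Tate--Van den Bergh \cite{ATV1}, so there is no internal argument to compare against. Your sketch reproduces the strategy of \cite{ATV1} itself --- the pure resolution $0\to A(-3)\to A(-2)^{3}\to A(-1)^{3}\to A\to k\to 0$, the description $\cV(I_2)=\{(p,q)\mid M(p)q=0\}$, the cubic $\det M$, the companion factorization for $\sigma^{-1}$, and a dimension count for (G2) --- so the route is the correct one, but several steps as written have genuine gaps, and they are precisely the points where regularity must be invoked.

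First, generic corank one only yields $\sigma$ on a dense open subset; for (G1) you need $M(p)$ to have rank exactly $2$ at \emph{every} point, and this is not automatic --- it is forced by exactness of the minimal free resolution (the matrix $M$ and its companion compose to zero and the resolution has no homology, which bounds the rank from below at each specialization); without this, $\cV(I_2)$ need not be a graph. Second, your argument is set-theoretic (``first-projection image'', ``generic $p$'', ``dense open subset''), and this collapses for the non-reduced point schemes occurring in the classification (Types TL and WL of this paper, where $E=\cV(x^{3})$ or $\cV(x^{2}y)$): there the set of closed points of $E$ is a line, the scheme structure on $V(\det M)$ is essential, and $\sigma$ must be produced as an automorphism of the scheme $E$, not as a map on closed points; the same issue affects the case $\det M\equiv 0$, where one must still verify that $\cV(I_2)$ is a graph over all of $\PP^{2}$. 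Third, the (G2) count $\dim_k I_2'\le 3$ requires surjectivity of the multiplication map $H^{0}(E,\cO_E(1))\otimes H^{0}(E,\sigma^{*}\cO_E(1))\to H^{0}(E,\cO_E(1)\otimes\sigma^{*}\cO_E(1))$ together with $h^{0}=6$ for the target; for reducible and non-reduced cubics this is the technical heart of \cite{ATV1}, and deferring it to ``Riemann--Roch on each component'' understates what must be checked. None of these are wrong turns, but each is a substantive missing argument rather than a routine verification.
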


\begin{rem} There exists a $4$-dimensional quantum polynomial algebra which is not a geometric algebra, however, as far as we know, there exists no example of a quantum polynomial algebra which does not satisfy (G1). 
\end{rem} 

We define a type of a $3$-dimensional quantum polynomial algebra $A=\cA(E, \s)$ in terms of the point scheme $E\subset \PP^2$.  
\begin{description}
\item[{\rm Type P}] 
      $E$ is $\mathbb{P}^{2}$. 
\item [{\rm Type S}] 
      $E$ is a triangle. 
\item[{\rm Type S'}] 
      $E$ is a union of a line and a conic meeting at two points. 
\item[{\rm Type T}]
      $E$ is a union of three lines meeting at one point. 
\item[{\rm Type T'}]
      $E$ is a union of a line and a conic meeting at one point. 
\item[{\rm Type NC}]
          $E$ is a nodal cubic curve. 
\item[{\rm Type CC}] $E$ is a cuspidal cubic curve. 
\item[{\rm Type TL}] $E$ is a triple line. 
\item[{\rm Type WL}]$E$ is a union of a double line and a line. 
\item[{\rm Type EC}] $E$ is an elliptic curve. 
\end{description}
\subsection{Quantum projective spaces finite over their centers}

\begin{defin} {\it A noncommutative scheme {\rm (}over $k${\rm )}} is a pair $X=(\mod X, \cO_X)$ consisting of a $k$-linear abelian category $\mod X$ and an object $\cO_X\in \mod X$.  We say that {\it two noncommutative schemes $X=(\mod X, \cO_X)$ and $Y=(\mod Y, \cO_Y)$ are isomorphic}, denoted by $X\cong Y$, if there exists an equivalence functor $F:\mod X\to \mod Y$ such that $F(\cO_X)\cong \cO_Y$. 
\end{defin} 

If $X$ is a commutative noetherian scheme, then we view $X$ as a noncommutative scheme by $(\mod X, \cO_X)$ where $\mod X$ is the category of coherent sheaves on $X$ and $\cO_X$ is the structure sheaf on $X$.  

Noncommutative affine and projective schemes are defined in \cite {AZ}. 

\begin{defin} 
If $R$ is a right noetherian algebra, then we define {\it the noncommutative affine scheme associated to $R$} by $\Specn R=(\mod R, R)$ where $\mod R$ is the category of finitely generated right $R$-modules and $R\in \mod R$ is the regular right module. 
\end{defin} 

Note that if $R$ is commutative, then $\Specn R\cong \Spec R$.  

\begin{defin}
If $A$ is a right noetherian graded algebra, $\grmod A$ is the category of finitely generated graded right $A$-modules, and $\tors A$ is the full subcategory of $\grmod A$ consisting of finite dimensional modules over $k$, then we define {\it the noncommutative projective scheme associated to $A$} by $\Projn A=(\tails A, \pi A)$ where $\tails A:=\grmod A/\tors A$ is the quotient category, $\pi:\grmod A\to \tails A$ is the quotient functor, and $A\in \grmod A$ is the regular graded right module. If $A$ is a $d$-dimensional quantum polynomial algebra, then we call $\Projn A$ {\it a quantum $\PP^{d-1}$}.  In particular,  if $d=3$, then we call $\Projn A$ {\it a quantum projective plane}.   
\end{defin} 

Note that if $A$ is commutative, then $\Projn A\cong \Proj A$.  It is known that if $A$ is a $2$-dimensional quantum polynomial algebra, then $\Projn A\cong \PP^1$.  


For a $3$-dimensional quantum polynomial algebra $A=\cA(E, \s)$, 
we have the following geometric characterization when $A$ 
is finite over its center. 
\begin{thm}[{\cite[Theorem 7.1]{ATV2}}]
\label{thm_{ATV2}}
Let $A=\mathcal{A}(E,\sigma)$ be 
a $3$-dimensional quantum polynomial algebra. 
Then the following are equivalent: 
\begin{enumerate}[{\rm (1)}]
\item $|\sigma|<\infty$. 
\item $A$ is finite over its center. 
\end{enumerate}
\end{thm}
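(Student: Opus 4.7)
The plan is to reduce the equivalence to the analogous statement for the twisted homogeneous coordinate ring $B = B(E, \s, \cO_E(1))$. By the structural results of Artin-Tate-Van den Bergh underlying Theorem~\ref{ATV1}, there exists a normal regular element $g \in A_3$ (satisfying $ag = g\nu(a)$ for the Nakayama automorphism $\nu$) with $A/gA \cong B$. I would first prove directly that $B$ is module-finite over $Z(B)$ if and only if $|\s|<\infty$, then transfer the conclusion to $A$ through $g$.

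For Step 1, if $\s^n = \id$ the $n$-th Veronese $B^{(n)} = \bigoplus_{k\geq 0} H^0(E, \mathcal{L}_n^{\otimes k})$ (with $\mathcal{L} = \cO_E(1)|_E$ and $\mathcal{L}_n = \mathcal{L} \otimes \s^*\mathcal{L} \otimes \cdots \otimes (\s^{n-1})^*\mathcal{L}$) collapses to the classical section ring, hence is commutative; its $\s$-invariant part lies in $Z(B)$, and $\s$-ampleness of $\mathcal{L}$ implies that $B$ is finitely generated over this central subring. Conversely, if $|\s|=\infty$, a Gelfand-Kirillov dimension argument shows that the ring of $\s$-invariant sections grows too slowly, so $Z(B)$ has strictly smaller GK-dimension than $B$ and cannot support $B$ as a finite module. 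For Step 2, the implication $(2)\Rightarrow(1)$ is immediate: the image of $Z(A)$ in $B$ lies in $Z(B)$, so $B$ is finite over its center and Step~1 gives $|\s|<\infty$. For $(1)\Rightarrow(2)$, assume $|\s|<\infty$; then by Step~1, $B$ is finite over $Z(B)$, and I would lift a finite generating set of $Z(B)$ to elements of $A$, iteratively correcting by elements of $gA$ to produce genuine central elements of $A$. Together with a suitable central power of $g$, these lifts generate a central subring over which $A$ is module-finite.

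The main obstacle is the correction procedure in $(1)\Rightarrow(2)$. Since $g$ is only normal (not central in general), $g^k$ is central precisely when $\nu^k$ acts trivially on the generators of $A$, and one must show that the finite-order hypothesis on $\s$ forces such a $k$ to exist; this already intertwines the Nakayama automorphism with $\s$, foreshadowing the role of $\nu^*\s^3$ in the main theorem of the paper. In addition, the successive-correction process must terminate, which I would establish from the Noetherianity of $A$ and the separatedness of the $g$-adic filtration on $A$.
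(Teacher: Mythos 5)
First, a point of comparison: the paper does not prove this statement at all --- it is imported verbatim as Theorem 7.1 of \cite{ATV2} and used as a black box --- so there is no internal argument to measure yours against. On its own terms, your outline does follow the broad shape of the original argument: pass to the twisted homogeneous coordinate ring $B=A/gA$, settle the question for $B$ (Veronese/invariant theory when $|\sigma|<\infty$, a growth argument when $|\sigma|=\infty$), and transfer to $A$ through the normal element $g$. The direction (2)$\Rightarrow$(1) is fine as you state it, and your Step~1 forward direction (the $\sigma$-invariant part of $B^{(n)}$ is central and $B$ is finite over it) is essentially correct in characteristic $0$.

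There is, however, a genuine gap exactly at the place you flag. Lifting a central element of $B$ to a central element of $A$ by successive correction modulo powers of $g$ requires, at each stage, that the obstruction class (measuring the failure of $[z,a]$ to lie in the next power of $g$, an element of a twisted $B$-bimodule such as $gA/g^2A$) be exact; neither Noetherianity nor the separatedness of the $g$-adic filtration provides this, and central elements of a quotient by a regular normal element do not lift in general. In \cite{ATV2} (and in Smith--Tate for the elliptic case) the central elements of $A$ in degrees divisible by $|\sigma|$ are produced by a substantive direct construction, not by a formal lifting. Your auxiliary step also assumes some power of $g$ is central, i.e.\ that the normalizing automorphism of $g$ has finite order; that this follows from $|\sigma|<\infty$ is itself nontrivial and is essentially the $\nu^*\sigma^3$ analysis that constitutes the present paper's main theorem, so it cannot be taken for granted here. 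Finally, two cases fall outside your reduction: $E=\mathbb{P}^2$, where there is no cubic divisor and no $g$ with $A/gA\cong B(E,\sigma,\mathcal{L})$ (though there the statement is easy, $A$ being a twist of a polynomial ring), and the non-reduced point schemes (Types TL and WL), where $\sigma$ can have finite order on $E_{\mathrm{red}}$ yet infinite order on $E$, so the divisor-theoretic growth argument for $Z(B)$ must engage the full scheme structure of $E$ rather than its reduced curve.
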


Since the property that $A$ is finite over its center is not preserved under isomorphisms of noncommutative projective schemes 
$\Projn A$, we will make the following rather ad hoc definition.  

\begin{defin}
\label{def_{Z(proj)}}
Let $A$ be a $d$-dimensional quantum polynomial algebra. 
We say that $\mathsf{Proj}_{{\rm nc}}A$ is \textit{finite over its center} 
if there exists a $d$-dimensional quantum polynomial algebra $A'$ 
finite over its center such that 
$\Projn A\cong \Projn A'$.  
\end{defin}

For a $3$-dimensional quantum polynomial algebra, the above definition coincides with \cite[Definition 4.14]{Mo2} by the following result. 

\begin{lem}[{\cite[Corollary A.10]{AOU}}]
\label{lem.AOU}
Let $A$ and $A'$ be $3$-dimensional quantum polynomial algebras.  Then $\grmod A\cong \grmod A'$ if and only if $\Projn A\cong \Projn A'$. 
\end{lem}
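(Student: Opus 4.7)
The plan is to prove the two directions separately; the reverse direction will carry all the weight. For $(\Rightarrow)$, suppose $F\colon\grmod A\to\grmod A'$ is an equivalence. Since $\tors A\subset\grmod A$ is intrinsically characterized as the full subcategory of finite length objects, $F$ restricts to an equivalence $\tors A\to\tors A'$ and hence descends to $\bar F\colon\tails A\to\tails A'$. The module $A\in\grmod A$ is, up to shift, the unique indecomposable finitely generated projective generator with degree-zero endomorphism ring $k$, so $F(A)\cong A'(n)$ for some $n\in\ZZ$. Composing $\bar F$ with the shift autoequivalence $(s')^{-n}$ on $\tails A'$ produces an equivalence sending $\pi A$ to $\pi A'$, yielding $\Projn A\cong\Projn A'$.

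For $(\Leftarrow)$, suppose $G\colon\tails A\to\tails A'$ is an equivalence with $G(\pi A)\cong\pi A'$. The difficulty is that $G$ need not intertwine the shift autoequivalences $s$ on $\tails A$ and $s'$ on $\tails A'$. The plan is to transport $s$ to a new autoequivalence $s'':=GsG^{-1}$ on $\tails A'$ and compare the two polarizations $(\tails A',\pi A',s')$ and $(\tails A',\pi A',s'')$. By the Artin--Zhang reconstruction theorem,
$$\bigoplus_{n\ge 0}\Hom_{\tails A'}\bigl(\pi A',(s'')^n\pi A'\bigr)\cong A,$$
while the same construction with $s'$ in place of $s''$ recovers $A'$. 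Thus $A$ and $A'$ arise as reconstruction algebras from two ample polarizations of the same pair $(\tails A',\pi A')$.

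The main obstacle is the final step: a rigidity statement asserting that, for $3$-dimensional AS-regular algebras, any two ample polarizations of $(\tails A',\pi A')$ fixing the structure sheaf are intertwined by a graded automorphism of the reconstructed algebra. This forces $A$ to be isomorphic to a Zhang twist of $A'$, and since Zhang twisting preserves graded module categories, $\grmod A\cong\grmod A'$ follows. This rigidity depends essentially on AS-regularity --- in particular on the uniqueness, up to appropriate twist, of the Serre/Nakayama autoequivalence on $\tails A'$ --- and is exactly what breaks down in less rigid settings.
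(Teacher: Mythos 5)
The paper gives no proof of this lemma at all: it is quoted verbatim from \cite[Corollary A.10]{AOU}, so the only thing to compare your attempt against is that external proof, and your attempt does not close the gap that makes the cited result a theorem. Your forward direction is fine and standard: $\tors$ is categorically characterized by finite length, indecomposable graded projectives are the shifts $A(n)$, and a post-composition with a power of the shift autoequivalence normalizes the image of $\pi A$. The problem is the reverse direction, which carries all the content. There your argument bottoms out in the ``rigidity statement'' that two ample polarizations of $(\tails A',\pi A')$ fixing $\pi A'$ are related by a twist --- but by Zhang's twisting theorem, ``$A$ is a Zhang twist of $A'$'' is (for connected graded algebras generated in degree one) \emph{equivalent} to ``$\grmod A\cong\grmod A'$'', so this assertion is essentially a restatement of the conclusion you are trying to prove. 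Nothing in the proposal establishes it.

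Two concrete reasons the asserted rigidity is not routine. First, the one intrinsic tool you invoke --- uniqueness of the Serre functor on $D^b(\tails A')$ --- only controls the \emph{cube} of the polarization twisted by the Nakayama automorphism (the Serre functor of a quantum $\PP^2$ involves $(-3)$ and $\nu$); this is exactly why the invariant in the paper's main theorem is $\nu^*\s^3$ rather than $\s$, and extracting control of $s'$ itself from control of $(s')^3$ up to Nakayama twist is a genuine additional step. Second, the pair $(\tails A',\pi A')$ admits ample autoequivalences, such as the one reconstructing the Veronese subalgebra, whose reconstruction algebras are \emph{not} graded Morita equivalent to $A'$; these are excluded only by using that both $A$ and $A'$ are $3$-dimensional quantum polynomial algebras (Hilbert series, AS-regularity), and your proposal never brings those hypotheses to bear. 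The actual proof in \cite{AOU} rests on an analysis specific to noncommutative projective planes; as written, your reverse direction is a reduction of the statement to an unproved statement of essentially equal strength.
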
 

To characterize ``geometric'' quantum projective spaces finite over their centers, we will introduce the following notion: 

\begin{defin}[{\cite[Definition 4.6]{Mo2}}]
For a geometric pair $(E, \s)$ 
where $E\subset \mathbb{P}^{n-1}$ and $\sigma\in {\rm Aut}_{k}E$, 
we define 
$$\Aut_k(\PP^{n-1}, E):=\{\phi|_E\in \Aut_k E\mid \phi\in \Aut_k \PP^{n-1}\},$$
and 
$$
\|\sigma \|:={\rm inf}\{i\in \mathbb{N}^{+} \mid 
\sigma^{i}\in \Aut_k (\PP^{n-1}, E)\}.
$$
\end{defin}
For a geometric pair $(E, \s)$, clearly $\|\sigma\| \leq |\sigma|$.  The following are the basic properties of $\|\s\|$.  

\begin{lem}[{\cite [Lemma 2.5]{MU1}, \cite[Lemma 4.16 (1)]{Mo2}}]
\label{lem_Mo2}
Let $A$ and $A'$ be $d$-dimensional quantum polynomial algebras 
satisfying {\rm (G1)} with 
$\mathcal{P}(A)=(E,\sigma)$ and $\mathcal{P}(A')=(E',\sigma')$. 
\begin{enumerate}[{\rm (1)}]
\item{} If $A\cong A'$, then $E\cong E'$ and $|\s|=|\s'|$. 
\item{} If ${\rm grmod}\,A\cong {\rm grmod}\,A'$, then $E\cong E'$ and $||\s||=||\s'||$. 
\end{enumerate}
In particular, if $A$ and $A'$ are $3$-dimensional quantum polynomial algebras such that $\Projn A\cong \Projn A'$, then $E\cong E'$ 
{\rm (}that is, $A$ and $A'$ are of the same type{\rm )} and $||\s||=||\s'||$.  
\end{lem}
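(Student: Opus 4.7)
The plan is to handle the two parts separately; the ``in particular'' statement will then follow from part~(2) via Lemma~\ref{lem.AOU}. For part~(1), I would use pure transport of structure. Given a graded algebra isomorphism $\phi : A \xrightarrow{\sim} A'$, the degree-one component $\phi_1 : A_1 \to A'_1$ dualizes to a projective-linear isomorphism $\widetilde\phi : \PP(A_1^*) \xrightarrow{\sim} \PP(A'_1{}^*)$. Since $\phi$ carries $I_2$ to $I'_2$, the map $\widetilde\phi\times\widetilde\phi$ sends $\cV(I_2)$ to $\cV(I'_2)$; by (G1) these loci are the graphs of $\s$ and $\s'$ respectively, so $\widetilde\phi(E)=E'$ and $\widetilde\phi\circ\s=\s'\circ\widetilde\phi$. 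This immediately yields $E\cong E'$ and $|\s|=|\s'|$.

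For part~(2), the key tool is Zhang's twisting theorem: $\grmod A \cong \grmod A'$ forces $A'$ to be isomorphic to a Zhang twist $A^\tau$ of $A$ by some graded algebra automorphism $\tau\in\Aut A$. A short calculation with the defining relations of $\cA(E,\s)$ shows $A^\tau=\cA(E,\rho\s)$, where $\rho:=\tau|_E \in H:=\Aut_k(\PP^{n-1},E)$ (the latter because $\tau$ is linear on $A_1$). The crucial observation is that, because $\tau$ is an \emph{algebra} automorphism, the induced autoequivalence of $\grmod A$ commutes with the degree-shift $[1]$; translated into point-module language this forces $\rho$ and $\s$ to commute in $\Aut_k E$. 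Hence $(\rho\s)^i = \rho^i\s^i$, and since $\rho^i\in H$ for every $i$, one has $(\rho\s)^i\in H$ iff $\s^i\in H$; thus $\|\s\|=\|\s'\|$.

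The ``in particular'' statement then follows: $\Projn A\cong\Projn A'$ gives $\grmod A\cong\grmod A'$ by Lemma~\ref{lem.AOU}, so part~(2) applies; and the type of $E$ is an abstract-isomorphism invariant, read off from the number of irreducible components, their multiplicities, and singularity types. The main obstacle lies in the Zhang-twist analysis of part~(2) --- identifying $A^\tau$ with $\cA(E,\rho\s)$ and deducing $\rho\s=\s\rho$ --- which rests on the intrinsic recovery of the pair $(E,\s)$ from $A$ via the moduli of point modules and their degree shifts, so that any algebra automorphism $\tau$ of $A$ automatically respects this structure. Carrying this out carefully is essentially the content of the cited \cite[Lemma~2.5]{MU1} and \cite[Lemma~4.16(1)]{Mo2}.
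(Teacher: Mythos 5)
The paper offers no proof of this lemma at all: it is imported verbatim from \cite[Lemma 2.5]{MU1} and \cite[Lemma 4.16 (1)]{Mo2}, so the only meaningful comparison is with the arguments in those references. Your part (1) (transport of structure along a graded isomorphism, which conjugates the graph of $\sigma$ inside $\PP^{n-1}\times\PP^{n-1}$ and hence carries $(E,\sigma)$ to $(E',\sigma')$) and your derivation of the ``in particular'' clause from part (2) together with Lemma \ref{lem.AOU} are correct and match what the cited sources do.

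The gap is in part (2). Zhang's theorem does \emph{not} say that $\grmod A\cong\grmod A'$ forces $A'$ to be the twist of $A$ by a single graded \emph{algebra} automorphism $\tau$; it produces a \emph{twisting system} $\{\tau_n\}_{n\in\ZZ}$, a compatible family of graded linear bijections, and such systems are in general not ``algebraic'' (i.e.\ not of the form $\tau_n=\tau^n$). Your entire mechanism for part (2) --- a single $\rho=\tau|_E\in\Aut_k(\PP^{n-1},E)$ commuting with $\sigma$, hence $(\rho\sigma)^i=\rho^i\sigma^i$ --- rests on this unavailable reduction, and it has no analogue for a genuine twisting system, where each degree contributes its own correction. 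The argument that actually works, and is what \cite{MU1} proves, is the following replacement: a graded equivalence yields $\phi\in\Aut_k\PP^{n-1}$ with $\phi(E)=E'$ such that for every $i$ one has $\sigma'^{\,i}=\rho_i\circ\phi\,\sigma^i\,\phi^{-1}$ with $\rho_i\in\Aut_k(\PP^{n-1},E')$ depending on $i$. No commutativity is needed: since each $\rho_i$ already lies in $\Aut_k(\PP^{n-1},E')$, one gets $\sigma'^{\,i}\in\Aut_k(\PP^{n-1},E')$ if and only if $\sigma^i\in\Aut_k(\PP^{n-1},E)$, whence $\|\sigma\|=\|\sigma'\|$. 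A secondary inaccuracy: even in the algebra-automorphism case, the commutation of $\rho$ with $\sigma$ follows from part (1) applied to $\tau\colon A\to A$ (any graded automorphism of $\cA(E,\sigma)$ induces an automorphism of $E$ commuting with $\sigma$), not from compatibility with the shift functor, which every equivalence in Zhang's framework enjoys and so distinguishes nothing.
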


For a $3$-dimensional quantum polynomial algebra $A=\cA(E, \s)$ 
of Type S, 
we have the following geometric characterization when a quantum projective plane $\mathsf{Proj}_{{\rm nc}}A$ 
is finite over its center. 
\begin{thm}[{\cite[Theorem 4.17]{Mo2}}]
\label{thm_{Mo2}}
Let $A=\mathcal{A}(E,\sigma)$ be 
a $3$-dimensional quantum polynomial algebra of Type S. 
Then the following are equivalent{\rm :} 
\begin{enumerate}[{\rm (1)}]
\item 
$\|\sigma \|<\infty$. 
\item 
$\mathsf{Proj}_{{\rm nc}}A$ is finite over its center. 
\end{enumerate}
\end{thm}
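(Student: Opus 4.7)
The implication $(2)\Rightarrow(1)$ is an easy consequence of the machinery already assembled: if $\Projn A\cong \Projn A'$ with $A'=\mathcal{A}(E',\sigma')$ a $3$-dimensional quantum polynomial algebra finite over its center, then Lemma \ref{lem.AOU} yields $\grmod A\cong \grmod A'$, Lemma \ref{lem_Mo2}(2) then gives $\|\sigma\|=\|\sigma'\|$, and Theorem \ref{thm_{ATV2}} gives $|\sigma'|<\infty$; combining with $\|\sigma'\|\leq |\sigma'|$ closes this direction.

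For $(1)\Rightarrow(2)$, my plan is to produce a companion $3$-dimensional quantum polynomial algebra $A'$ such that $\Projn A\cong \Projn A'$ and $\mathcal{P}(A')=(E',\sigma')$ with $|\sigma'|<\infty$; Theorem \ref{thm_{ATV2}} will then make $A'$ finite over its center and Definition \ref{def_{Z(proj)}} delivers the conclusion. The natural candidate is a Zhang twist $A'=A^\tau$ by a graded algebra automorphism $\tau\in \Aut A$: general twist theory gives $\grmod A\cong \grmod A^\tau$ and hence $\Projn A\cong \Projn A^\tau$ by Lemma \ref{lem.AOU}, while $A^\tau$ remains a $3$-dimensional quantum polynomial algebra whose point scheme automorphism is $\sigma$ composed with $\tau|_E\in \Aut_k(\PP^2,E)$. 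Writing $\|\sigma\|=n$ so that $\sigma^n=\phi|_E$ for some $\phi\in \Aut\PP^2$, the goal is to choose $\tau$ so that the twisted point scheme automorphism has finite order in $\Aut E$.

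The crux thus splits into a lifting problem and a group-theoretic adjustment. The lifting problem asks that a prescribed element of $\Aut_k(\PP^2,E)$ be realized as the restriction of a graded automorphism of $A$. For Type S the point scheme is a triangle and $\Aut_k(\PP^2,E)$ is an extension of a diagonal three-torus by $S_3$ permuting the components; since Type S quantum polynomial algebras admit explicit defining relations that are pairwise monomial in coordinates adapted to the three lines, one verifies directly that the three-torus embeds in $\Aut A$ by rescaling generators. The group-theoretic step is to show that, after passing to the power of $\sigma$ preserving each component, the restrictions $\sigma|_{L_i}\in \PGL_2$ are forced by $\|\sigma\|<\infty$ to be conjugate to rescalings rather than to parabolic transformations, so that a suitable diagonal torus element simultaneously cancels them on all three components.

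I expect the main obstacle to be the compatibility of these per-component cancellations: the three rescalings must descend from a single element of the three-torus, and since $\sigma$ itself may permute the three lines, descending the argument from $\sigma^m$ (with $m$ the order of the induced permutation) back to $\sigma$ introduces cocycle-like compatibility conditions that must be checked directly. A secondary subtlety is that the Zhang twist formula does not make $\tau|_E$ commute with $\sigma$ in $\Aut E$, so the manipulation $(\tau|_E\sigma)^n=\tau|_E^n\sigma^n$ is only valid after restricting to the components fixed by $\sigma^m$; this is, fortunately, exactly the setting produced by the lifting step.
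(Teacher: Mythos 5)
Your direction $(2)\Rightarrow(1)$ is correct and is exactly the chain the paper itself uses (Lemma \ref{lem.AOU}, Lemma \ref{lem_Mo2}, Theorem \ref{thm_{ATV2}}, and $\|\sigma'\|\leq|\sigma'|$), so nothing to add there. For $(1)\Rightarrow(2)$, however, what you have written is a plan rather than a proof: the two steps on which everything hinges --- (i) that the diagonal three-torus inside $\Aut_k(\PP^2,E)$ lifts to graded automorphisms of an \emph{arbitrary} Type S algebra (not just the Calabi--Yau normal form), and (ii) that the twisted product $\tau|_E\cdot{}^{\sigma}(\tau|_E)\cdots{}^{\sigma^{n-1}}(\tau|_E)\cdot\sigma^n$ can be made to have finite order when $\sigma$ cyclically permutes the three lines --- are announced as ``to be checked directly'' but never checked. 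Step (ii) in particular is not innocuous: when $\sigma$ permutes the components, the norm map on the torus collapses onto the diagonal $\mathbb{G}_m$, which is trivial in $\PGL_3(k)$, so one cannot simply cancel $\phi$ by an $n$-th root; one has to argue instead with the explicit parameters of the defining relations (for $yz-\a zy,\ zx-\b xz,\ xy-\c yx$ the twist moves $(\a,\b,\c)$ while preserving $\a\b\c$, and $\|\sigma\|=|\a\b\c|$, so one equalizes the three scalars to a cube root of $\a\b\c$). Until that computation is carried out, the implication is not established.

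It is worth noting that your strategy is essentially that of the cited source \cite[Theorem 4.17]{Mo2}, whereas the present paper never constructs a twist by hand: it invokes Theorem \ref{Main2} to replace $A$ by a Calabi--Yau algebra $A'=\cA(E',\s')$ with $\grmod A\cong\grmod A'$, and then reads off from the explicit relations in Table 1 (Lemma \ref{SP}) that $\|\s'\|=|\a^3|=|{\s'}^3|$ for Types S$_1$ and S$_3$, so that $\|\s\|=\|\s'\|<\infty$ forces $|\s'|<\infty$ and hence $A'$ itself is finite over its center by Theorem \ref{thm_{ATV2}}. That reduction absorbs precisely the lifting and cocycle issues you flag into the (already proved) classification of Calabi--Yau normal forms, which is why the paper's argument closes and yours, as it stands, does not.
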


The purpose of this paper is to extend the above theorem to all types.

\subsection{Points of a noncommutative scheme}

\begin{defin} Let $R$ be an algebra.  {\it A point of $\Specn R$} is an isomorphism class of a simple right $R$-module $M\in \mod R$ such that $\dim_kM<\infty$.  A point $M$ is called {\it fat} if $\dim _kM>1$. 
\end{defin}

\begin{rem} If $R$ is a commutative algebra and $p\in \Spec A$ is a closed point, then $A/\frak m_p\in \mod R$ is a point where $\frak m_p$ is the maximal ideal of $R$ corresponding to $p$.  In fact, this gives a bijection between the set of closed points of $\Spec R$ and the set of points of $\Specn R$.  In this commutative case, there exists no fat point.  
\end{rem} 

\begin{rem} Fat points are not preserved under Morita equivalences.  For example, $\mod k\cong \mod M_2(k)$, but it is easy to see that $\Specn k$ has no fat point while $\Specn M_2(k)$ has a fat point.  However, since $\Specn R\cong \Specn R'$ if and only if $R\cong R'$, fat points are preserved under isomorphisms of $\Specn R$. 
\end{rem} 

\begin{exa} \label{ex.wy} If $R=k\<u, v\>/(uv-vu-1)$ is the $1$st Weyl algebra, then it is well-known that there exists no finite dimensional right $R$-module, so $\Specn R$ has no point at all. 
\end{exa} 

\begin{exa}[{cf. \cite{S}}]
\label{ex.Lie}
If $R=k\<u, v\>/(vu-uv-u)$ is the enveloping algebra of a $2$-dimensional non-abelian Lie algebra, then the set of points of $\Specn R$ is given by $\{R/uR+(v-\mu)R\}_{\mu\in k}$, so $\Specn R$ has no fat point.  
In fact, the linear map $\d:k[u]\to k[u]$ defined by $\d(f(u))=uf'(u)$ is a derivation of $k[u]$ such that $R=k[u][v; \d]$ is the Ore extension so that $vf(u)=f(u)v+uf'(u)$. If $M$ is a finite dimensional right $R$-module, then there exists $f(u)=a_du^d+\cdots +a_1u+a_0\in k[u]\subset R$ of the minimal degree $\deg f(u)=d\geq 1$ such that $Mf(u)=0$.  Since $uf'(u)=vf(u)-f(u)v$, $M(df(u)-uf'(u))=0$ such that $\deg(df(u)-uf'(u))<\deg f(u)$, so $df(u)=uf'(u)$ by minimality of $\deg f(u)=d\geq 1$, but this is possible only if $f(u)=a_1u$, so $Mu=0$.  It follows that $M$ can be viewed as an $R/(u)$-module, a point of $\Specn (R/(u))\cong \Specn k[v]$, so $M\cong R/uR+(v-\mu)R$ for some $\mu\in k$.  Since $\Specn (R/(u))\cong \Specn k[v]$ is a commutative scheme, $\Specn R$ has no fat point. 
\end{exa} 

\begin{exa}[{\cite[Lemma 4.19]{Mo2}}]
\label{ex.q}
If $R=k\<u, v\>/(uv+vu)$ is a $2$-dimensional (ungraded) quamtum polynomial algebra, then the set of points of $\Specn R$ is given by 
$$\{R/(u-\l)R+vR\}_{\l\in k}\cup\{R/uR+(v-\mu)R\}_{\mu\in k}\cup\{R/(x^2-\l)R+(\sqrt{\mu}x+\sqrt{-\l}y)R+(y^2-\mu)R\}_{0\neq \l, \, \mu\in k}.$$
Among them, $\{R/(x^2-\l)R+(\sqrt{\mu}x+\sqrt{-\l}y)R+(y^2-\mu)R\}_{0\neq \l, \, \mu\in k}$ is the set of fat points of $\Specn R$.  
\end{exa}


\begin{defin} Let $A$ be a graded algebra.  
{\it A point of $\Projn A$} is an isomorphism class of a simple object of the form $\pi M\in \tails A$ where $M\in \grmod A$ is a graded right $A$-module such that $\lim_{i\to \infty}\dim _kM_i<\infty$.  A point $\pi M$ is called {\it fat} if $\lim _{i\to \infty}\dim _kM_i>1$, and, in this case, $M$ is called {\it a fat point module over $A$}.   
\end{defin} 

\begin{rem} If $A$ is a graded commutative algebra and $p\in \Proj A$ is a closed point, then $\pi (A/\frak m_p)\in \tails A$ is a point where $\frak m_p$ is the homogeneous maximal ideal of $A$ corresponding to $p$.  In fact, this gives a bijection between the set of closed points of $\Proj A$ and the set of points of $\Projn A$.  In this commutative case, there exists no fat point.  
\end{rem}

\begin{rem} It is unclear that fat points are preserved under isomorphisms of $\Projn A$ in general. However, fat point modules are preserved under graded Morita equivalences, so if $A$ and $A'$ are both $3$-dimensional quantum polynomial algebras such that $\Projn A\cong \Projn A'$, then there exists a natural bijection between the set of fat points of $\Projn A$ and that of $\Projn A'$ by Lemma \ref{lem.AOU}.
\end{rem} 

The following facts will be used to prove our main results.  

\begin{lem} [{\cite{Mo2}, \cite{A}}] Let $A=\cA(E, \s)$ be a $3$-dimensional quantum polynomial algebra.  
\label{lem.general}
\begin{enumerate}[{\rm (1)}]
\item{} 
$\|\sigma\|=1$ if and only if $E=\mathbb{P}^2$.  
\item{} 
$1<\|\s\|<\infty$ if and only if $\Projn A$ has a fat point.  
\end{enumerate}
\end{lem}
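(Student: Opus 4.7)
The plan is to handle the two items separately: (1) follows directly from (G2) and the Zhang twist construction, while (2) reduces to Artin's classification of fat points in \cite{A}.

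For (1), one direction is immediate: if $E = \PP^2$, then $\s \in \Aut_k \PP^2 = \Aut_k(\PP^2, E)$, so $\|\s\| = 1$. For the converse, I would assume $\s = \phi|_E$ for some $\phi \in \Aut \PP^2$ and compare defining relations via (G2). The incidence set $\{(p, \s(p)) : p \in E\}$ is contained in the graph $\{(p, \phi(p)) : p \in \PP^2\} \subset \PP^2 \times \PP^2$, so the degree-$2$ relation space of $A = \cA(E, \s)$ contains that of $\cA(\PP^2, \phi)$. The algebra $\cA(\PP^2, \phi)$ is the Zhang twist of $k[x, y, z]$ by the graded automorphism induced by $\phi$, hence it is itself a $3$-dimensional quantum polynomial algebra with point scheme $\PP^2$. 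Since both algebras have Hilbert series $(1 - t)^{-3}$, both relation spaces are $3$-dimensional subspaces of $k\<x, y, z\>_2$, so the inclusion is an equality and $A = \cA(\PP^2, \phi)$. By the uniqueness half of Theorem \ref{ATV1}, the point scheme of $A$ is then $\PP^2$, forcing $E = \PP^2$.

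For (2), the plan is to invoke Artin's analysis in \cite{A}: fat points of $\Projn A$ of multiplicity $e > 1$ are in bijection with $\s$-orbits in $E$ of length $e$ on which $\s^e$ is the restriction of an element of $\Aut \PP^2$, with $e$ minimal with this property. Thus fat points exist precisely when some positive power of $\s$ lies in $\Aut_k(\PP^2, E)$, i.e. $\|\s\| < \infty$; since multiplicity must satisfy $e > 1$, this forces $\|\s\| > 1$. Conversely, assuming $1 < \|\s\| < \infty$, part (1) gives $E \neq \PP^2$, and Artin's construction produces a fat point module of multiplicity $\|\s\|$ from any orbit witnessing $\s^{\|\s\|} \in \Aut_k(\PP^2, E)$.

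The principal obstacle lies in part (2), specifically in the substantive classification from \cite{A}: producing a fat point module from orbit data requires an explicit construction via truncated point modules, and recovering orbit data from a given fat point module requires a Hilbert-series analysis of its canonical filtration. Part (1) is by contrast almost formal, provided one accepts the standard fact that $\cA(\PP^2, \phi)$, being a Zhang twist of $k[x, y, z]$, is a $3$-dimensional quantum polynomial algebra with point scheme $\PP^2$.
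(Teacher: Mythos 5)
The paper itself gives no proof of Lemma \ref{lem.general}; it is stated purely as a quotation of \cite[Theorem 3.4(i)]{A} and of \cite{Mo2}, so there is no internal argument to compare against. Your part (1) is a correct and self-contained proof, and it is essentially the standard one: if $\s=\phi|_E$ with $\phi\in\Aut_k\PP^2$, every degree-two relation of the Zhang twist $\cA(\PP^2,\phi)$ of $k[x,y,z]$ vanishes on $\{(p,\s(p)):p\in E\}$, so $I_2(\cA(\PP^2,\phi))\subseteq I_2(A)$; both spaces have dimension $9-6=3$ because both algebras have Hilbert series $(1-t)^{-3}$, hence $A=\cA(\PP^2,\phi)$, and since the point scheme is recovered from $\cV(I_2)$ via (G1) this forces $E=\PP^2$. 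The converse is trivial. No objection there, and note that this direction is also what disposes of the case $\|\s\|=1$ in part (2), since a twist of $k[x,y,z]$ has $\Projn A\cong\PP^2$, which has no fat point.

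Part (2) is where all the content lies, and your gloss on Artin's classification is not correct: fat points of $\Projn A$ are \emph{not} in bijection with $\s$-orbits in $E$. The paper's own machinery shows this. By Theorem \ref{thm.dec}, the points of $\Projn A$ split into those of $\Projn A/(g)$ for a suitable homogeneous normal element $g$ and those of $\Specn A[g^{-1}]_0$; the former are exactly the \emph{ordinary} points and are parameterized by the closed points of $E$ (this is \cite[Proposition 4.4]{Mo2}, invoked in the proof of Corollary \ref{cor.main}), while the fat points live in the second piece. In Example \ref{ex.q} they form a two-parameter family $\{R/(x^2-\l)R+(\sqrt{\mu}x+\sqrt{-\l}y)R+(y^2-\mu)R\}_{0\neq\l,\,\mu\in k}$, which cannot biject with orbits inside the one-dimensional scheme $E$. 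Consequently the route you sketch for the substantive direction --- producing a fat point module from an orbit witnessing $\s^{\|\s\|}\in\Aut_k(\PP^2,E)$ by gluing truncated point modules --- would fail: that construction yields ordinary point modules of multiplicity one, supported on $E$, whereas the genuine fat points arise from the PI structure of the localization $A[g^{-1}]_0$ (whose PI degree is governed by $\|\s\|$ when finite). If you intend part (2) as a citation, as the paper does, cite \cite[Theorem 3.4(i)]{A} for the precise statement that for $E\neq\PP^2$ a fat point exists if and only if $\s^n\in\Aut_k(\PP^2,E)$ for some $n\geq 1$, and combine it with your part (1); if you intend to reprove it, the argument must pass through the normal element and the localization, not through orbits in $E$.
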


\begin{thm}
[{\cite[Theorem 4.20]{Mo2}}] 
\label{thm.dec}
If $A$ is a quantum polynomial algebra and $x\in A$ is a homogeneous normal element of positive degree, then there exists a bijection between the set of points of $\Projn A$ and the disjoint union of the set of points of $\Projn A/(x)$ and the set of points of $\Specn A[x^{-1}]_0$.  In this bijection, fat points correspond to fat points.   
\end{thm}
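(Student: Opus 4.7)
The plan is to split a point $\pi M$ of $\Projn A$ according to the behaviour of the normal element $x$ on $M$, using the fact that since $x$ is normal, $(x)$ is a two-sided ideal, the subset $\Gamma_{(x)}(M):=\{m\in M:mx^n=0 \text{ for some }n\geq 0\}$ is a graded $A$-submodule, and the Ore localization $A[x^{-1}]$ exists and is a $\ZZ$-graded algebra. Simplicity of $\pi M$ in $\tails A$ forces the dichotomy that either $\pi\Gamma_{(x)}(M)=\pi M$ (so $M$ is eventually $x$-torsion) or $\pi\Gamma_{(x)}(M)=0$ (so, after possibly replacing $M$ by $M/\Gamma_{(x)}(M)$, multiplication by $x$ is injective on $M$).

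In the torsion case, after truncating $M$ in sufficiently high degree we may assume $Mx=0$, so $M$ is a graded $A/(x)$-module, and $\pi M$ corresponds to a point of $\Projn A/(x)$; conversely any point of $\Projn A/(x)$ pulls back along $A\to A/(x)$ to a point of $\Projn A$. Since $A/(x)$ has lower GK-dimension, the asymptotic dimension of $M$ equals the asymptotic dimension of the corresponding $A/(x)$-module, so the fat/thin distinction is preserved. In the torsion-free case I would form $M[x^{-1}]$, observe that $x:M_i\to M_{i+|x|}$ is eventually an isomorphism (because $M[x^{-1}]$ is finitely generated over the $\ZZ$-graded algebra $A[x^{-1}]$ whose degree-zero part is noetherian of lower dimension), and take the degree-zero part $M[x^{-1}]_0$, which is a module over $A[x^{-1}]_0$ whose $k$-dimension is precisely $\lim_{i\to\infty}\dim_k M_i$. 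Simplicity of $\pi M$ transfers to simplicity of $M[x^{-1}]_0$: any proper $A[x^{-1}]_0$-submodule $N\subset M[x^{-1}]_0$ would generate a proper graded $A[x^{-1}]$-submodule of $M[x^{-1}]$ and, by intersecting with $M$ and saturating, a proper subobject of $\pi M$.

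For the reverse direction from a simple finite-dimensional $A[x^{-1}]_0$-module $N$, I would construct a graded $A[x^{-1}]$-module $\widetilde N:=N\otimes_{A[x^{-1}]_0}A[x^{-1}]$, note that its degree $i$ component is isomorphic to $N$ via multiplication by $x$, and take $M$ to be the graded $A$-submodule of $\widetilde N$ generated in some nonnegative degree; then $\pi M$ is a simple object in $\tails A$ with $\lim_i\dim_k M_i=\dim_k N$, and the two constructions are mutually inverse up to shift and addition of finite-dimensional submodules (which are killed by $\pi$).

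The main obstacle I anticipate is the simplicity transfer between $\pi M$ and $M[x^{-1}]_0$, which requires a careful saturation argument together with the fact that $A[x^{-1}]_0$ inherits noetherianness and finite global dimension from $A$ so that the categories $\mod A[x^{-1}]_0$ and the $x$-torsion-free part of $\tails A$ behave well; once this equivalence is set up, the matching of fat points is immediate because all three modules (the point module, its localization, and the quotient $A/(x)$-module) share the same asymptotic $k$-dimension.
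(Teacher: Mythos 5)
The paper itself gives no proof of this statement: it is quoted directly from \cite[Theorem 4.20]{Mo2}, so there is nothing internal to compare against. Your proposal is correct in outline and is essentially the argument underlying the cited result --- the dichotomy $\pi\Gamma_{(x)}(M)=\pi M$ or $0$ forced by simplicity and normality of $x$, reduction to $\Projn A/(x)$ in the torsion case, and passage to $M[x^{-1}]_0$ over $A[x^{-1}]_0$ in the torsion-free case, where $\dim_k M[x^{-1}]_0=\lim_{i\to\infty}\dim_k M_i$ makes the matching of fat points immediate; the only repairs needed are cosmetic (eventual bijectivity of $x\colon M_i\to M_{i+\deg x}$ follows from injectivity plus the eventual constancy of $\dim_k M_i$ built into the definition of a point, not from a dimension count on $A[x^{-1}]_0$, and the simplicity transfer is cleanest via the observation that $A[x^{-1}]$ is strongly graded, so its graded modules are equivalent to $A[x^{-1}]_0$-modules).
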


\section{Main results}
In this section, we will state and prove our main results. 

Let $A$ be a graded algebra and $\nu\in \Aut A$ a graded algebra automorphism.  For a graded $A$-$A$-bimodule $M$, we define a new graded $A$-$A$ bimodule $M_{\nu}=M$ as a graded vector space with the new actions $a*m*b:=am\nu(b)$ for $a, b\in A, m\in M$. Let $A$ be a $d$-dimensional quantum polynomial algebra.  {\it The canonical module of $A$} is defined by 
$$\omega _A:=\lim_{i\to \infty}\Ext^d_A(A/A_{\geq i}, A),$$
which has a natural graded $A$-$A$ bimodule structure.  It is known that there exists $\nu\in \Aut A$ such that $\omega_A\cong A_{\nu^{-1}}(-d)$ as graded $A$-$A$ bimodules.  We call $\nu$ 
{\it the Nakayama automorphism of $A$}.  
Among quantum polynomial algebras, Calabi-Yau quantum polynomial algebras defined below are easier to handle.

\begin{defin} A quantum polynomial algebra $A$ is called {\it Calabi-Yau} if the Nakayama automorphism of $A$ is the identity.
\end{defin}   

The following theorem plays an essential role to prove our main results, claiming that every quantum projective plane has a $3$-dimensional Calabi-Yau quantum polynomial algebra as a homogeneous coordinate ring. 

\begin{thm}[{\cite[Theorem 4.4]{IM2}}]
\label{Main2}
For every 
$3$-dimensional quantum polynomial algebra $A$,
there exists a $3$-dimensional Calabi-Yau quantum polynomial algebra 
$A'$
such that $\grmod A\cong \grmod A'$
so that $\Projn A\cong \Projn A'$. 
\end{thm}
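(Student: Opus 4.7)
The plan is to construct $A'$ as an appropriate Zhang twist of $A$. Recall that for a graded $k$-algebra automorphism $\tau\in \Aut A$, the Zhang twist $A^{\tau}$ is the graded algebra on the same underlying graded vector space as $A$ with twisted multiplication $a*b := a\tau^{|a|}(b)$. Zhang's theorem produces a canonical equivalence $\grmod A\cong \grmod A^{\tau}$, so any Zhang twist automatically satisfies $\Projn A\cong \Projn A^{\tau}$ by the definition of the noncommutative projective scheme.

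First, I would check that the class of $3$-dimensional quantum polynomial algebras is closed under Zhang twists: the global dimension, the Ext-vanishing condition defining AS-regularity, noetherianity, and the Hilbert series are all preserved by passing from $A$ to $A^{\tau}$. Hence $A^{\tau}$ is again a $3$-dimensional quantum polynomial algebra for every $\tau \in \Aut A$, and the problem reduces to choosing a twist whose Nakayama automorphism is trivial.

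Next, I would determine how the Nakayama automorphism transforms under Zhang twisting. Using the bimodule description $\omega_A \cong A_{\nu_A^{-1}}(-3)$ of the canonical module and tracking the effect of the twisted multiplication on both left and right actions, a direct calculation yields a formula of the shape $\nu_{A^{\tau}} = \nu_A \circ \tau^{-3}$ (with the usual sign conventions). Thus $A' := A^{\tau}$ is Calabi-Yau exactly when $\tau^{3} = \nu_A$ as elements of $\Aut A$.

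The heart of the argument, and the expected main obstacle, is to produce such a cube root $\tau \in \Aut A$ of $\nu_A$. Extracting a cube root of the linear map $\nu_A|_{A_1}$ inside $\mathrm{GL}(A_1)$ is straightforward from its Jordan form, using that $k$ is algebraically closed of characteristic zero; the delicate point is that this linear cube root must preserve the defining quadratic relations of $A$ in order to lift to a graded algebra automorphism. To address this I would proceed type by type using the classification in Theorem \ref{ATV1}, exploiting the explicit normal forms available for $(A, \nu_A)$ in each of the types P, S, S$'$, T, T$'$, NC, CC, TL, WL, EC. In each case the normal form is rigid enough that the cube root constructed linearly on $A_1$ automatically respects the defining relations, producing the required $\tau$ and hence the Calabi-Yau algebra $A' = A^{\tau}$.
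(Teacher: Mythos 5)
The paper does not actually prove this statement: Theorem \ref{Main2} is imported wholesale from \cite[Theorem 4.4]{IM2}, so there is no in-paper argument to compare yours against. Judged on its own terms, your Zhang-twist strategy is the natural one, and its first two steps are sound: twisting by a graded algebra automorphism preserves the class of $d$-dimensional quantum polynomial algebras and gives $\grmod A\cong \grmod A^{\tau}$, hence $\Projn A\cong \Projn A^{\tau}$. One caveat already at step two: the transformation rule for the Nakayama automorphism under twisting is not $\nu_{A^{\tau}}=\nu_A\circ\tau^{-3}$ on the nose; by the Reyes--Rogalski--Zhang homological identities it carries an extra correction by the homological determinant of $\tau$ (the scalar automorphism acting by $(\mathrm{hdet}\,\tau)^{m}$ in degree $m$), so the equation you must solve is a twisted cube-root equation rather than a literal one. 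This is repairable, but it should not be waved through with ``the usual sign conventions.''

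The genuine gap is the last step. The existence of $\tau\in\Aut A$ with $\tau^{3}=\nu_A$ (suitably corrected) \emph{is} the content of the theorem, and your justification --- that ``the normal form is rigid enough that the cube root constructed linearly on $A_1$ automatically respects the defining relations'' --- is an assertion, not an argument. A cube root of $\nu_A|_{A_1}$ in $\mathrm{GL}_3(k)$ is highly non-unique, and there is no a priori reason that any choice preserves $I_2$: the group $\Aut A$ of graded algebra automorphisms can be very small (for Type EC it is a finite group modulo scalars), and an element of a non-connected algebraic group need not admit a cube root inside that group. Verifying that a suitable $\tau$ exists requires the explicit defining relations and explicit Nakayama automorphisms of every type from \cite{IM1} and \cite{Ma}, and that case-by-case computation is exactly what \cite[Theorem 4.4]{IM2} consists of. Note also that \cite{IM2} and \cite{MU1} work with the more flexible operation of replacing $\s$ by $\phi\s$ for $\phi\in\Aut_k(\PP^2,E)$, implemented by general twisting systems and detected through the invariant $\overline{A}=\cA(E,\nu^{*}\s^{3})$ of Lemma \ref{lem.ovl}; this is strictly more general than a Zhang twist by a single element of $\Aut A$, so even granting the strategy, you would still have to check in each type that the required $\phi$ lifts to an honest algebra automorphism. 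As it stands, your proposal reduces the theorem to an unproved claim of the same difficulty.
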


By the above theorem, the proofs of our main results reduce to the Calabi-Yau case.  

\subsection{Calabi-Yau case} 

\noindent
Let $E=\cV(x^3+y^3+z^3-\l xyz)\subset \PP^2, \; \l\in k, \l^3\neq 27$ be an elliptic curve in the Hesse form. 
We fix a group structure with the identity element $o:=(1,-1,0)\in E$, and write 
$E[n]:=\{p\in E\mid np=o\}$ {\it the set of $n$-torsion points}. We also denote by $\s_p\in\Aut_k E$ {\it the translation automorphism by a point $p\in E$}.  It is known that $\s_p\in \Aut_k(\PP^2, E)$ if and only if $p\in E[3]$ (cf. \cite[Lemma 5.3]{Mo1}).  



\begin{lem}
\label{SP}
Denote a $3$-dimensional Calabi-Yau quantum polynomial algebra
as 
$$A=k\langle x,y,z \rangle/(f_{1}, f_{2}, f_{3})=\mathcal{A}(E,\sigma).$$
Then Table $1$ below gives a list of defining relations 
$f_{1}, f_{2}, f_{3}$ and the corresponding geometric pairs 
$(E,\sigma)$ for such algebras up to isomorphism. In Table $1$, we remark that: 
\begin{enumerate}[{\rm (1)}]
\item{} Type S and Type T are further divided into Type S$_1$, Type S$_3$ and Type T$_1$, Type T$_3$, respectively, in term of the form of $\s$.
\item{} The point scheme $E$ may consists of several irreducible components, and, in this case, $\sigma$ is described on each component.   
\item{} For Type NC and Type CC, $\s$ in Table 1 is defined except for the unique singular point $(0, 0, 1)\in E$, which is preserved by $\s$.
\item{} For Type TL and Type WL, $E$ is non-reduced, and description of $\s$ is omitted.
\end{enumerate} 
\end{lem}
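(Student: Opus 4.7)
My plan is to reduce the statement to a finite case-by-case verification, organized by the type of the point scheme $E$. By Theorem \ref{ATV1}, every $3$-dimensional quantum polynomial algebra is geometric with $E$ either $\PP^2$ or a cubic divisor in $\PP^2$, so Types P, S, S', T, T', NC, CC, TL, WL, EC exhaust the list. Every quadratic AS-regular algebra of dimension three is determined by a superpotential $w\in V^{\otimes 3}$, where $V=A_1$, with defining relations $f_i=\partial_{x_i}w$; the Nakayama automorphism $\nu$ is precisely the twist relating $w$ to its cyclic rotation, so the Calabi-Yau condition $\nu=\id$ is equivalent to $w$ being cyclically symmetric.

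First I would collect the Artin-Tate-Van den Bergh normal forms for $(f_1,f_2,f_3)$ for each type (catalogued in \cite{ATV1} and refined to Calabi-Yau representatives in \cite{IM2}, on which Theorem \ref{Main2} relies) and impose cyclic symmetry on the corresponding superpotential to land in the Calabi-Yau locus of each type. For each resulting normal form, the pair $(E,\s)$ is then read off by the standard multilinearization procedure: writing $[f_1,f_2,f_3]^{t}=M[x,y,z]^{t}$ with $M$ a $3\times 3$ matrix of linear forms, one has $E=\cV(\det M)$, and $\s(p)$ is characterized by the equation $M(p)\s(p)^{t}=0$ on the kernel fibers. This is a direct computation for each entry of the table.

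The main obstacle will be Type EC, corresponding to a Calabi-Yau Sklyanin algebra: one must verify that the Hesse-form elliptic curve $E=\cV(x^3+y^3+z^3-\lambda xyz)$ paired with a translation $\s_p$ indeed arises, and pin down the dependence of $p\in E$ on the structure constants. The degenerate Types NC, CC, TL, WL require extra care because $E$ is either singular or non-reduced, which is exactly what motivates the hedges in items (3) and (4) of the statement: for NC and CC one must restrict $\s$ to the smooth locus, while for TL and WL the multilinearization does not yield a well-defined $\s$ on all of $E$. For Types S, S', T, T' the analysis is essentially linear once the normal form is fixed, and the subdivision S$_1$/S$_3$, T$_1$/T$_3$ is dictated by whether $\s$ preserves each irreducible component of $E$ or permutes them cyclically. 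Apart from Type EC, each case reduces to a short direct computation from the multilinearization, so the bulk of the work is organizing and verifying the table rather than deriving new phenomena.
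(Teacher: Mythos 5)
Your proposal is correct in outline and follows the same basic strategy as the paper: a case-by-case treatment over the ten types, starting from the classified Calabi--Yau normal forms and then computing the geometric pair for each. The differences are in mechanism. For the list of relations $f_1,f_2,f_3$ the paper simply cites \cite[Theorem 3.3]{IM1} and \cite[Corollary 4.3]{Ma} rather than re-deriving the normal forms from cyclically symmetric superpotentials; be aware that your derivation step is itself a nontrivial classification up to isomorphism (particularly for Type EC, where one must also account for the action of $\Aut_k(\PP^2,E)$ on Hesse cubics), so in practice you would end up citing the same sources. For the pairs $(E,\s)$ the paper does not use the multilinearization matrix $M$ with $E=\cV(\det M)$ and $\s(p)=\ker M(p)$; instead it takes the candidate $(E,\s)$ from Table 1 and \emph{verifies} condition (G1) by direct substitution, checking $f_i(p,\s(p))=0$ for all $p\in E$ and concluding $\mathcal P(A)=(E,\s)$ because the point scheme is already known to be a cubic divisor containing the graph (the sample computation is done for Type CC, the rest being cited from \cite{U} and \cite{MU1}). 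Your forward computation buys you the table entries without guessing them, at the cost of handling the rank behaviour of $M(p)$ at singular and embedded points; the paper's verification is shorter but presupposes the answer. One point where your plan understates the work: for Type EC the multilinearization yields $\s$ as an explicit rational map, but identifying it as the translation $\s_p$ by $p=(\a,\b,\c)$ in the group law with origin $o=(1,-1,0)$ requires the addition formula on the Hesse cubic, which is exactly the content the paper outsources to \cite{Ma}. Neither of these is a gap, but both should be cited or carried out explicitly.
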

\begin{center}
{\renewcommand\arraystretch{1.1}
{\small
\begin{tabular}{|c|p{5.3cm}|p{2.8cm}|p{6.0cm}|}
\multicolumn{4}{c}{Table $1$}
\\[5pt]
\hline
           Type
           & $f_{1}, f_{2}, f_{3}$
           & $E$
           & $\sigma$
            \\ \hline\hline
$\rm{P}$ 
            & 
           $
           \left\{
             \begin{array}{ll}
             yz-\alpha zy\\
             zx-\alpha xz\quad\quad\alpha^{3}=1\\
             xy-\alpha yx
             \end{array}
             \right.
             $
           & $\mathbb{P}^{2}$
           &  \rule{0pt}{25pt}
           $\sigma(a,b,c)=(a,\alpha b,\alpha^{2}c)$
           \\[18pt] \hline
$\rm{S}_1$ 
           & 
           $\left\{
             \begin{array}{ll}
              yz-\alpha zy\\
              zx-\alpha xz \quad\quad \alpha^{3}\neq 0,1\\
              xy-\alpha yx
             \end{array}
             \right.
             $
           & $\begin{array}{ll}
           & \cV(x) \\
           \cup & \cV(y) \\
           \cup & \cV(z)
           \end{array}$
           & \rule{0pt}{25pt}
           $\left\{
             \begin{array}{ll}
              \sigma(0,b,c)=(0,b,\alpha c)\\
              \sigma(a,0,c)=(\alpha a,0,c)\\
               \sigma(a,b,0)=(a,\alpha b,0)
             \end{array}
             \right.
             $
           \\[18pt] \hline
$\rm{S}_3$ 
          & 
           $\left\{
             \begin{array}{ll}
             zy-\alpha x^{2}\\
             xz-\alpha y^{2}\quad\quad \alpha^{3}\neq 0,1\\
             yx-\alpha z^{2}
             \end{array}
             \right.
             $
           & $\begin{array}{ll}
           & \cV(x) \\
           \cup & \cV(y) \\
           \cup & \cV(z)
           \end{array}$
           & \rule{0pt}{25pt}
           $\left\{
             \begin{array}{ll}
              \sigma(0,b,c)=(\alpha c, 0,b)\\
              \sigma(a,0,c)=(c, \alpha a,0)\\
              \sigma(a,b,0)=(0,a,\alpha b)
             \end{array}
             \right.
             $
           \\[18pt] \hline
\rm{S'}
            &
            $\left\{
             \begin{array}{ll}
               yz-\alpha zy+x^{2}\\
               zx-\alpha xz\quad\quad \alpha^{3}\neq 0,1\\
               xy-\alpha yx
             \end{array}
             \right.
             $
            & $\begin{array}{ll}
           & \cV(x) \\
           \cup & \cV(x^2-\l yz) \\
           & \l=\frac{\a^3-1}{\a}
           \end{array}$
            & \rule{0pt}{25pt}
            $\left\{
             \begin{array}{ll}
             \sigma(0,b,c)=(0,b, \alpha c)\\
             \sigma(a,b,c)=(a,\alpha b,\alpha^{-1} c)\\
             \end{array}
             \right.
             $
            \\[18pt] \hline
$\rm{T_1}$ 
          & 
            $\left\{
             \begin{array}{ll}
             yz-zy+xy+yx-y^{2}\\
             zx-xz+x^{2}-yx-xy\\
             xy-yx
             \end{array}
             \right.
             $
            & $\begin{array}{ll}
           & \cV(x) \\
           \cup & \cV(y) \\
           \cup & \cV(x-y)
           \end{array}$
            & 
            $\left\{
             \begin{array}{ll}
             \sigma(0,b,c)=(0,b,b+c)\\
             \sigma(a,0,c)=(a,0,a+c)\\
             \sigma(a,a,c)=(a,a,-a+c)
             \end{array}
             \right.
             $
           \\ \hline
%
$\rm{T_3}$ 
           & 
            $\left\{
             \begin{array}{ll}
             yz-xy-yx+y^{2}-xz-zx \\
             \hfill +x^{2}\\
             zx-x^{2}+xy+yx-zy-yz \\ 
             \hfill -y^{2}\\
             xy-x^{2}-y^{2}
             \end{array}
             \right.
             $
           & $\begin{array}{ll}
           & \cV(x) \\
           \cup & \cV(y) \\
           \cup & \cV(x-y)
           \end{array}$
            & \rule{0pt}{35pt}
            $\left\{
             \begin{array}{ll}
             \sigma(0,b,c)=(b,0, b+c) \\
             \sigma(a,0,c)=(a,a,-c) \\
             \sigma(a,a,c)=(0,a,-c)
             \end{array}
             \right.
             $
            \\[30pt] \hline
\rm{T'} 
          & 
            $\left\{
             \begin{array}{ll}
            yz-zy+xy+yx\\
            zx-xz+x^{2}-yz-zy+y^{2}\\
            xy-yx-y^{2}
             \end{array}
             \right.
             $
            & $\begin{array}{ll}
           & \cV(y) \\
           \cup & \cV(x^2-yz)
           \end{array}$
            & 
            $\left\{
             \begin{array}{ll}
             \sigma(a,0,c)=(a,0,a+c), \\
             \sigma(a,b,c)=(a-b,b,-2a+b+c) 
             \end{array}
             \right.
             $
            \\ \hline
$\rm{NC}$ 
           & 
            $\left\{
             \begin{array}{ll}
               yz-\alpha zy+x^{2}\\
               zx-\alpha xz+y^{2}\quad\quad \alpha^{3}\neq 0,1\\
               xy-\alpha yx
             \end{array}
             \right.
             $
            & $\begin{array}{ll}
            \mathcal{V}(x^{3}+y^{3} \\
            \qquad -\l xyz) \\
            \l=\frac{\a^3-1}{\a}\end{array}$
            & \rule{0pt}{25pt}
             $
             \begin{array}{ll}
             \sigma(a,b,c)=\\
             (a,\alpha b,
             -\frac{a^{2}}{b}+\alpha^{2}c) \\
             \end{array}
             $
           \\[18pt] \hline
$\rm{CC}$ 
         & 
         $\left\{
             \begin{array}{ll}
            yz-zy+y^{2}+3x^{2}\\
            zx-xz+yx+xy-yz-zy\\
            xy-yx-y^{2}
             \end{array}
             \right.
             $
          & $\mathcal{V}(x^{3}-y^{2}z)$
          & $
             \begin{array}{ll}
               \sigma(a,b,c)=\\
               (a-b,b,
               -3\frac{a^2}{b}+3a-b+c) 
             \end{array}
             $
          \\ \hline
$\rm{TL}$
           & 
             $\left\{
             \begin{array}{ll}
             yz-\alpha zy -x^{2}\\
             zx-\alpha xz \quad\quad \alpha^{3}=1\\
             xy-\alpha yx
             \end{array}
             \right.
             $
           & $ \mathcal{V}(x^{3})$
            & \rule{0pt}{25pt}
             omitted
             \\[18pt] \hline
$\rm{WL}$ 
             & 
            $\left\{
             \begin{array}{ll}
             yz-zy-\dfrac{1}{3}y^{2}
             \vspace{0.5em} \\
             zx-xz-\dfrac{1}{3}(yx+xy)\\
             xy-yx
             \end{array}
             \right.
             $
            & $\mathcal{V}(x^{2}y)$
            & 
            \rule{0pt}{35pt}
             omitted 
            \\[25pt] \hline
EC 
             & 
            $\left\{
             \begin{array}{ll}
              \a yz+\b zy+\c x^{2} \\
              \a zx+\b xz+\c y^{2} \\
              \a xy+\b yx+\c z^{2}
             \end{array}
             \right.
             $
             
             where $p=(\a,\b,\c)\in E\backslash E[3]$
            & $\begin{array}{ll}
            \mathcal{V}(x^{3}+y^{3}+z^{3} \\
            \qquad -\lambda xyz), \\  
            \lambda=\frac{\a^3+\b^3+\c^3}{\a\b\c}
            \end{array}$
            & 
            \rule{0pt}{35pt}
              $\s_p$ where $p=(\a,\b,\c)\in E\backslash E[3]$
            \\[30pt] \hline
\end{tabular}
}}
\end{center}

\begin{proof} The list of the defining relations $f_1, f_2, f_3$ is given in \cite[Theorem 3.3]{IM1} and \cite[Corollary 4.3]{Ma}.  It is not difficult to calculate their corresponding geometric pairs $(E, \s)$ using the condition (G1) 
(see, for example, \cite[the proof of Theorem 3.1]{U} for Type P, S$_{1}$, S$_{3}$, S', and \cite[the proof of Theorem 3.6]{MU1} for Type T$_{1}$, T'.)
We only give some calculations to check that $(E, \s)$ in Table 1 is correct for Type CC. 

Let 
$A=k\langle x,y,z \rangle/(f_{1}, f_{2}, f_{3})$ be a 3-dimensional Calabi-Yau quantum polynomial algebra of Type CC
where 
$$f_1=yz-zy+y^{2}+3x^{2}, \;  
            f_2=zx-xz+yx+xy-yz-zy, \; 
            f_3=xy-yx-y^{2},$$
            and let $E=\mathcal{V}(x^{3}-y^{2}z)$, 
          and 
          $$\sigma(a,b,c)=\begin{cases} 
               (a-b,b,
               -3\dfrac{a^2}{b}+3a-b+c) & \textnormal { if } (a, b, c)\neq (0, 0, 1), \\
               (0, 0, 1) & \textnormal { if } (a, b, c)=(0, 0, 1) 
             \end{cases}$$
as in Table 1.  If $p=(a, b, c)\in E$, then $a^3-b^2c=0$, 
so  
\begin{align*}
f_1(p, \s(p)) &= f_1((a, b, c), (a-b,b,
               -3\dfrac{a^2}{b}+3a-b+c)) \\
               &= \;  b(-3\dfrac{a^2}{b}+3a-b+c)-cb+b^2+3a(a-b) \\
               &= \;  -3a^2+3ab-b^2+bc-bc+b^2+3a^2-3ab  =0, \\
f_2(p, \s(p)) &= f_2((a, b, c), (a-b,b,
               -3\dfrac{a^2}{b}+3a-b+c))\\
               &= \;  c(a-b)-a(-3\dfrac{a^2}{b}+3a-b+c)+b(a-b)+ab-b(-3\dfrac{a^2}{b}+3a-b+c)-cb \\
               &= \;  ac-bc+3\dfrac{a^3}{b}-3a^2+ab-ac+ab-b^2+ab+3a^2-3ab+b^2-bc-bc \\
               &= \;  \dfrac{3}{b}(a^3-b^2c)=0, \\
f_3(p, \s(p)) &= f_3((a, b, c), (a-b,b,
               -3\dfrac{a^2}{b}+3a-b+c)) \\
               &= \;  ab-b(a-b)-b^2 = ab-ab+b^2-b^2 = 0, 
\end{align*}
hence $\{(p, \s(p))\in \PP^2\times \PP^2\mid p\in E\}\subset \cV(f_1, f_2, f_3)$.  Since $E\subset \PP^2$ is a cuspidal cubic curve (and we know that the point scheme of $A$ is not $\PP^2$), $E$ is the point scheme of $A$, so ${\mathcal P}(A)=(E, \s)$. 
\end{proof}

\begin{thm} \label{q.main}
If $A=\cA(E, \s)$ is a $3$-dimensional Calabi-Yau quantum polynomial algebra, then $||\s||=|\s^3|$, so the following are equivalent:
\begin{enumerate}[{\rm (1)}]
\item{} $|\s|<\infty$. 
\item{} $||\s||<\infty$. 
\item{} $A$ is finite over its center. 
\item{} $\Projn A$ is finite over its center. 
\end{enumerate}
\end{thm}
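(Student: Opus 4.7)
The plan is to reduce the four equivalences to the single identity $\|\s\|=|\s^3|$ and then verify this identity by a case-by-case analysis of Table~$1$.

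First I would assemble the logical skeleton. The equivalence (1)~$\Leftrightarrow$~(3) is Theorem~\ref{thm_{ATV2}}. Assuming $\|\s\|=|\s^3|$, the equivalence (1)~$\Leftrightarrow$~(2) is immediate, because $|\s^3|$ divides $|\s|$ and $|\s|$ divides $3|\s^3|$, so $|\s|<\infty$ iff $|\s^3|<\infty$. The implication (3)~$\Rightarrow$~(4) follows by taking $A'=A$ in Definition~\ref{def_{Z(proj)}}. For (4)~$\Rightarrow$~(2), if $\Projn A\cong\Projn A'$ with $A'$ finite over its center, then Theorem~\ref{thm_{ATV2}} gives $|\s'|<\infty$, whence $\|\s'\|\le|\s'|<\infty$, and Lemma~\ref{lem_Mo2}(2) gives $\|\s\|=\|\s'\|<\infty$.

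The core task is to prove $\|\s\|=|\s^3|$ by inspecting each row of Table~$1$. For Type~P, Lemma~\ref{lem.general}(1) yields $\|\s\|=1$, and the explicit formula $\s(a,b,c)=(a,\a b,\a^2 c)$ with $\a^3=1$ gives $\s^3=\id$, so $|\s^3|=1$. For Type~EC, $\s=\s_p$ and the translation $\s_q$ belongs to $\Aut_k(\PP^2,E)$ iff $q\in E[3]$; hence $\|\s\|$ is the least $n>0$ with $np\in E[3]$, namely $|p|/\gcd(|p|,3)=|3p|=|\s^3|$. For the multiplicative types S$_1$, S$_3$, S', and NC, I would parametrize each irreducible component of $E$ (or the smooth locus in the NC case), write a candidate diagonal/quasi-diagonal $\phi\in\Aut\PP^2$ preserving $E$, and show that requiring $\phi|_E=\s^n$ on every component forces the cyclotomic condition $\a^{3n}=1$; this gives $\|\s\|=|\a^3|=|\s^3|$. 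For the additive types T$_1$, T$_3$, T', and CC, $\s$ acts as a translation on a rational component, and the triple-compatibility needed for a common extension $\phi$ across three components (or along the cuspidal parametrization) forces a cubic condition of the form $3n\cdot c=0$ with $c\neq 0$, so in characteristic $0$ both $\|\s\|$ and $|\s^3|$ are infinite.

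The main obstacle is the two non-reduced types Type~TL ($E=\cV(x^3)$) and Type~WL ($E=\cV(x^2y)$), where Table~$1$ omits $\s$. For these, I would extract $\s$ directly from $\cV(I_2)\subset\PP^2\times\PP^2$ using the defining relations. Although $\s$ is trivial (resp.\ essentially a translation) on the reduced scheme of $E$, the nontrivial information lies in the infinitesimal action on the non-reduced structure, and it is exactly this part that obstructs a lift to $\Aut\PP^2$ (for otherwise Lemma~\ref{lem.general}(1) would force $E=\PP^2$). A careful direct analysis of these two types, together with the ten previous ones, will complete the verification of $\|\s\|=|\s^3|$ and hence the theorem.
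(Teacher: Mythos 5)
Your logical skeleton and your treatment of the ten types where Table~$1$ gives $\s$ explicitly (P, S$_1$, S$_3$, S', T$_1$, T$_3$, T', NC, CC, EC) coincide with the paper's argument: reduce everything to $\|\s\|=|\s^3|$, get (1)~$\Leftrightarrow$~(3) from Theorem~\ref{thm_{ATV2}}, (3)~$\Rightarrow$~(4) from the definition, and (4)~$\Rightarrow$~(2) from Lemma~\ref{lem_Mo2}(2); the multiplicative types give a cyclotomic condition $\a^{3i}=1$ and the additive types never extend. That part is fine.

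The genuine gap is in Types TL and WL, which you correctly identify as the main obstacle but then defer to ``a careful direct analysis'' of $\s$ on the non-reduced scheme that you never carry out. This is exactly the part the paper refuses to do directly: for $E=\cV(x^3)$ and $E=\cV(x^2y)$ one would have to make sense of $\Aut_k(\PP^2,E)$ for a non-reduced cubic and decide whether \emph{any} positive power of $\s$ extends, and your only supporting remark --- that a lift would contradict Lemma~\ref{lem.general}(1) --- is a non sequitur, since that lemma only characterizes $\|\s\|=1$, not $\|\s\|<\infty$. The paper instead argues indirectly through fat points: for Type TL, $x\in A_1$ is regular normal, $A/(x)$ is a $2$-dimensional quantum polynomial algebra so $\Projn A/(x)\cong\PP^1$ has no fat point, and $A[x^{-1}]_0$ is the first Weyl algebra, which has no finite-dimensional modules (Example~\ref{ex.wy}); by the decomposition of points along a normal element (Theorem~\ref{thm.dec}), $\Projn A$ has no fat point, and then Lemma~\ref{lem.general} (both parts, using $E\neq\PP^2$) forces $\|\s\|=\infty=|\s^3|$. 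Type WL is handled the same way with $y$ normal, $A/(y)\cong k[x,z]$, and $A[y^{-1}]_0$ the enveloping algebra of the non-abelian $2$-dimensional Lie algebra (Example~\ref{ex.Lie}). To complete your proof you would either need to execute the non-reduced scheme-theoretic computation (which the paper deliberately avoids, omitting $\s$ from Table~$1$ for these types) or adopt this normal-element/fat-point argument.
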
 
\begin{proof}
First, we will show that $||\s||=|\s^3|$ for each type using the defining relations $f_1, f_2, f_3$ and geometric pairs $(E, \s)$ given in Lemma \ref{SP}.  Recall that $\s^i\in \Aut_k(\PP^2, E)$ if and only if it is represented by a matrix in $\PGL_3(k)\cong \Aut_k \PP^2$.

\noindent
\underline{Type P}\quad
Since $\s^3=\id$, $||\s||=1=|\s^3|$. 

\medskip

\noindent
\underline{Type S$_{1}$}\quad
Since
$$
\begin{cases} 
\sigma^i(0,b,c)=(0,b,\alpha^i c), \\
\sigma^i(a,0,c)=(\alpha^i a,0,c)=(\a^{2i}a, 0, \a^ic), \\
\sigma^i(a,b,0)=(a,\alpha^i b,0)=(\a^{2i}a, \a^{3i}b, 0)
\end{cases}
$$
$\s^i\in \Aut_k(\PP^2, E)$ if and only if $\a^{3i}=1$, so $||\s||=|\a^3|=|\s^3|$.  

\medskip

\noindent
\underline{Type S$_{3}$}\quad
Since
$$
\begin{cases} 
\sigma^i(0,b,c)=(0,b,\alpha^i c)\begin{pmatrix} 0 & 1 & 0 \\ 0 & 0 & 1 \\ 1 & 0 & 0 \end{pmatrix}^i, \\
\sigma^i(a,0,c)=(\alpha^i a,0,c)\begin{pmatrix} 0 & 1 & 0 \\ 0 & 0 & 1 \\ 1 & 0 & 0 \end{pmatrix}^i, \\
\sigma^i(a,b,0)=(a,\alpha^i b,0)\begin{pmatrix} 0 & 1 & 0 \\ 0 & 0 & 1 \\ 1 & 0 & 0 \end{pmatrix}^i, 
\end{cases}
$$
and $\begin{pmatrix} 0 & 1 & 0 \\ 0 & 0 & 1 \\ 1 & 0 & 0 \end{pmatrix}\in \Aut_k (\PP^2, E)$, 
$\s^i\in \Aut_k(\PP^2, E)$ if and only if $\a^{3i}=1$, so $||\s||=|\a^3|=|\s^3|$.  

\medskip

\noindent
\underline{Type S'}\quad
Since 
$$
\begin{cases}
\sigma^i(0,b,c)=(0,b, \alpha^i c), \\
\sigma^i(a,b,c)=(a,\alpha^i b,\alpha^{-i} c)=(\a^{-i}a, b, \a^{-2i}c), \\
\end{cases}
$$
$\s^i\in \Aut_k(\PP^2, E)$ if and only if $\a^{3i}=1$, so $||\s||=|\a^3|=|\s^3|$.  

\medskip

\noindent
\underline{Type T$_{1}$}\quad
Since
$$
\begin{cases}
\sigma^{i}(0,b,c)=(0,b,ib+c), \\
\sigma^{i}(a,0,c)=(a,0,ia+c), \\
\sigma^{i}(a,a,c)=(a,a,-ia+c), 
\end{cases}
$$
$\s^i\not \in \Aut_k (\PP^2, E)$ for every $i\geq 1$, so $||\s||=\infty=|\s^3|$.  

\medskip

\noindent
\underline{Type T$_{3}$}\quad
Since
$$
\begin{cases}
\sigma^{3i}(0,b,c)=(0,b,ib+c), \\
\sigma^{3i}(a,0,c)=(a,0, ia+c), \\
\sigma^{3i}(a,a,c)=(a,a,-ia+c), 
\end{cases}
$$
$\s^{3i}\not \in \Aut_k (\PP^2, E)$ for every $i\geq 1$, so $||\s||=\infty=|\s^3|$.  

\medskip

\noindent
\underline{Type T'}\quad
Since
$$
\begin{cases}
\sigma^{i}(a,0,c)=(a,0,ia+c), \\
\sigma^{i}(a,b,c)=(a-ib,b,-2ia+i^2b+c), 
\end{cases}
$$
$\s^i\not \in \Aut_k (\PP^2, E)$ for every $i\geq 1$, so $||\s||=\infty=|\s^3|$.  

\medskip

\noindent
\underline{Type NC}\quad
Since 
$$\s^i(a, b, c)=(a, \a^i b, -\frac{\a^{3i}-1}{\a^{i-1}(\a^3-1)}\frac{a^2}{b}+\a^{2i} c),$$
$\s^i\in \Aut_k(\PP^2, E)$ if and only if $\a^{3i}=1$, so $||\s||=|\a^3|=|\s^3|$. 


\medskip

\noindent
\underline{Type CC}\quad
Since 
$$\s^i(a, b, c)=(a-ib, b, -3i\frac{a^2}{b}+3i^2a-i^3b+c),$$
$\s^{i}\not \in \Aut (\PP^2, E)$ for every $i\geq 1$, so $||\s||=\infty=|\s^3|$.  

\medskip

\noindent
\underline{Type TL}\quad
Since $A=k\<x, y, z\>/(yz-\a zy-x^2, zx-\a xz, xy-\a yx), \; \a^3=1$, we see that $x\in A_1$ is a regular normal element.  Since $A/(x)\cong k\<y, z\>/(yz-\a zy)$ is a $2$-dimensional quantum polynomial algebra, $\Projn A/(x)\cong \PP^1$ has not fat point.  Since $A[x^{-1}]_0\cong k\<u, v\>/(uv-vu-\a)$ where $u=yx^{-1}, v=zx^{-1}$ is isomorphic to the $1$st Weyl algebra, $\Specn A[x^{-1}]_0$ has no (fat) point by Example \ref{ex.wy}.  By Theorem \ref{thm.dec},
$\Projn A$ has no fat point. Since $E\neq \PP^2$, $||\s||=\infty=|\s^3|$ by Lemma \ref{lem.general}.  

\medskip

\noindent
\underline{Type WL}\quad
Since $A=k\<x, y, z\>/(yz-zy-(1/3)y^2, zx-xz-(1/3)(yx+xy), xy-yx)$, we see that $y\in A_1$ is a regular normal element.  Since $A/(y)\cong k[x, z]$ is a $2$-dimensional (quantum) polynomial algebra, $\Projn A/(y)= \PP^1$ has not fat point.  Since $A[y^{-1}]_0\cong k\<u, v\>/(vu-uv-u)$ where $u=xy^{-1}, v=zy^{-1}$ is isomorphic to the enveloping algebra of a $2$-dimensional non-abelian Lie algebra,
$\Specn A[y^{-1}]_0$ has no fat point by Example \ref{ex.Lie}.  By Theorem \ref{thm.dec}, 
$\Projn A$ has no fat point. Since $E\neq \PP^2$, $||\s||=\infty=|\s^3|$ by Lemma \ref{lem.general}.   

\medskip

\noindent
\underline{Type EC}\quad\noindent
Since $\s_p^i=\s_{ip}\in \Aut_k (\PP^2, E)$ if and only if $ip\in E[3]$ if and only if $3ip=o$, $||\s_p||=|3p|=|\s^3_p|$. 

\medskip 

Next, we will show the equivalences (1) $\Leftrightarrow$ (2) $\Leftrightarrow$ (3) $\Leftrightarrow$ (4).  Since $||\s||=|\s^3|$ for every type, (1) $\Leftrightarrow$ (2).  By Theorem \ref{thm_{ATV2}}, (1) $\Leftrightarrow $ (3).  By definition, (3) $\Rightarrow $ (4), so it is enough to show that (4) $\Rightarrow $ (2).  Indeed, if $\mathsf{Proj}_{{\rm nc}}A$ is finite over its center, 
then
there exists a
$3$-dimensional quantum polynomial algebra 
$A'=\mathcal{A}(E',\s')$ 
which is finite over its center such that 
$\Projn A\cong \Projn A'$ by Definition \ref{def_{Z(proj)}}, 
so $\|\sigma\|=\|\s'\| \leq$
$|\s'|<\infty$ by Lemma \ref{lem_Mo2} and 
Theorem \ref{thm_{ATV2}}. 
\end{proof} 

\subsection{General case} 

\begin{defin}[\textnormal{\cite[Definition 3.2]{MU1}}]
 For a $d$-dimensional geometric quantum polynomial algebra $A=\cA(E, \s)$ with the Nakayama automorphism  $\nu\in \Aut A$,
we define a new graded algebra $\overline A:=\cA(E, \nu^*\s^d)$ satisfying (G2).  
\end{defin} 


\begin{lem}[\textnormal{\cite[Theorem 3.5]{MU1}}]
\label{lem.ovl}
Let $A$ and $A'$ be geometric quantum polynomial algebras.  If $\grmod A\cong \grmod A'$, then $\overline A\cong \overline {A'}$. 
\end{lem}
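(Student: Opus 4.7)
The plan is to reduce the statement to a computation about Zhang twists. By Zhang's theorem on graded equivalences of connected graded algebras, the hypothesis $\grmod A \cong \grmod A'$ implies that $A' \cong A^{\phi}$ as graded algebras for some graded algebra automorphism $\phi \in \Aut A$, where $A^{\phi}$ denotes the Zhang twist of $A$ by $\phi$. It therefore suffices to establish $\overline{A^{\phi}} \cong \overline{A}$ for an arbitrary such $\phi$.

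First I would analyze how the geometric data of $A = \cA(E,\s)$ transforms under the Zhang twist. Writing $\phi_1 := \phi|_{A_1} \in \mathrm{GL}(A_1)$ and letting $\bar\phi \in \Aut_k \PP^{n-1}$ denote the induced projective linear automorphism, a direct inspection of the quadratic relations (using $R^{\phi} = (\id\otimes \phi^{-1})(R)$) shows that the point scheme of $A^{\phi}$ coincides with $E$, while the defining automorphism becomes $\s^{\phi} = \bar\phi|_E \circ \s$. In particular $A^{\phi} = \cA(E, \bar\phi\,\s)$ is again a geometric quantum polynomial algebra.

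Next I would track the Nakayama automorphism under the twist. From the description of the canonical bimodule $\omega_A \cong A_{\nu^{-1}}(-d)$ and the well-known behaviour of $\Ext^d_A(k,A)$ under Zhang twists, one derives a transformation law of the form $\nu_{A^{\phi}} = \phi^{-d}\circ \nu_A$ (with a suitable sign convention for the twist). Restricting to $A_1$ and passing to $\Aut_k E$, this yields $(\nu_{A^{\phi}})^{*} = \nu_A^{*} \circ \bar\phi^{-d}$.

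The main obstacle, and the crux of the argument, is the verification of the identity
\[
(\nu_{A^{\phi}})^{*}(\s^{\phi})^{d} \;=\; \nu_A^{*}\,\s^{d} \quad \text{in } \Aut_k E.
\]
Since $\bar\phi$ and $\s$ need not commute a priori, the expansion $(\bar\phi\,\s)^{d}$ does not collapse to $\bar\phi^{d}\s^{d}$ by formal rearrangement. Instead, one must exploit the structural compatibility between $\s$ and $\nu$ built into every $d$-dimensional AS-regular algebra (essentially, conjugation by $\nu^{*}$ permutes the $\s$-orbit structure on $E$ in a controlled way), together with the fact that any graded algebra automorphism of $A$ restricts to an element of $\Aut_k(\PP^{n-1},E)$ preserving the graph of $\s$. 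These compatibilities force the telescoping $\bar\phi$ factors inside $(\bar\phi\,\s)^{d}$ to consolidate into a single $\bar\phi^{d}$ that cancels against the $\bar\phi^{-d}$ in $(\nu_{A^{\phi}})^{*}$. Once the identity is in place,
\[
\overline{A^{\phi}} \;=\; \cA\bigl(E,(\nu_{A^{\phi}})^{*}(\s^{\phi})^{d}\bigr) \;=\; \cA(E,\nu_A^{*}\s^{d}) \;=\; \overline{A},
\]
so $\overline{A'} \cong \overline{A^{\phi}} \cong \overline{A}$, which is the desired conclusion.
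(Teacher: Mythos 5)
The paper offers no proof of this lemma at all: it is quoted directly from \cite[Theorem 3.5]{MU1}, so what you are really attempting is a reconstruction of that cited proof, and your overall strategy (reduce to Zhang twists, track $(E,\s)$ and the Nakayama automorphism through the twist, verify that $\nu^*\s^d$ is unchanged) is indeed the strategy of \cite{MU1}. The fatal step is the very first reduction. Zhang's theorem on graded equivalences of connected graded algebras generated in degree $1$ does \emph{not} say that $\grmod A\cong\grmod A'$ forces $A'\cong A^{\phi}$ for a single graded algebra automorphism $\phi\in\Aut A$; it says that $A'$ is isomorphic to a twist $A^{\theta}$ by a \emph{twisting system} $\theta=\{\theta_n\}$, a sequence of graded linear bijections satisfying $\theta_n(a\,\theta_m(b))=\theta_n(a)\,\theta_{n+m}(b)$. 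Twisting systems of the form $\{\phi^{n}\}$ are the special ``algebraic'' ones, and the graded equivalences that matter for this lemma --- for instance those relating $\cA(E,\s_p)$ and $\cA(E,\s_q)$ for two different translations of an elliptic curve --- genuinely require non-algebraic twisting systems. By restricting to a single $\phi$ you have assumed away essentially all of the content of the statement.

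This mis-reduction also inverts where the difficulty lies. If $\phi$ really were a graded algebra automorphism of $A=\cA(E,\s)$, then $\phi\otimes\phi$ preserves $I_2$, hence the induced projective automorphism $\bar\phi$ preserves $\cV(I_2)=\{(p,\s(p))\mid p\in E\}$, which says precisely that $\bar\phi$ and $\s$ commute on $E$; your ``main obstacle'' $(\bar\phi\s)^{d}\neq\bar\phi^{d}\s^{d}$ then evaporates for trivial reasons, and the appeal to ``structural compatibility forcing the telescoping'' is covering for a non-issue rather than for the real one. The genuine work, carried out in \cite{MU1}, is for a general twisting system: there one shows using (G1) that $A^{\theta}=\cA(E,\bar\theta_1|_E\circ\s)$ and that the maps $\bar\theta_n$ satisfy intertwining relations with $\s$ (not commutativity), and it is these relations, combined with how the Nakayama automorphism transforms under a twisting system, that make $\nu^*\s^{d}$ invariant. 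None of that computation appears in your proposal, so as written it does not establish the lemma.
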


\begin{rem}
If $A$ and $A'$ are both $3$-dimensional quantum polynomial algebras of the same Type P, S$_1$, S'$_1$, T$_1$, T'$_1$, then the converse of the above lemma was proved in \cite [Theorem 3.6]{MU1}. 
\end{rem} 

\begin{thm} \label{q.nu}  
If $A=\cA(E, \s)$ is a $3$-dimensional quantum polynomial algebra with the Nakayama automorphism $\nu\in \Aut A$, then $||\s||=|\nu^*\s^3|$, so the following are equivalent:
\begin{enumerate}[{\rm (1)}]
\item{} $|\nu^*\s^3|<\infty$. 
\item{} $||\s||<\infty$. 
\item{} $\Projn A$ is finite over its center. 
\end{enumerate}  
Moreover, if $A$ is of Type T, T', CC, TL, WL, then $A$ is never finite over its center. 
\end{thm}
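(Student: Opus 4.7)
The plan is to reduce everything to the Calabi--Yau case handled in Theorem \ref{q.main}. By Theorem \ref{Main2} I would choose a $3$-dimensional Calabi--Yau quantum polynomial algebra $A'=\cA(E',\s')$ with $\grmod A\cong\grmod A'$, so that $\Projn A\cong\Projn A'$. Lemma \ref{lem_Mo2}(2) then forces $E\cong E'$ (so $A$ and $A'$ are of the same type) and $\|\s\|=\|\s'\|$, and Theorem \ref{q.main} applied to the Calabi--Yau $A'$ gives $\|\s'\|=|\s'^3|$. To complete the chain $\|\s\|=|\nu^*\s^3|$ it then remains only to prove $|\nu^*\s^3|=|\s'^3|$.

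For that step I would apply Lemma \ref{lem.ovl} to promote $\grmod A\cong\grmod A'$ to an isomorphism $\overline A\cong\overline{A'}$ of graded algebras. Since $\nu'=\id$ for the Calabi--Yau $A'$, we have $\overline{A'}=\cA(E',\s'^3)$, while by definition $\overline A=\cA(E,\nu^*\s^3)$. Any isomorphism of graded algebras restricts to a linear isomorphism of degree-$1$ parts, and hence induces an isomorphism $\PP^2\to\PP^2$ which carries $\cV(I_2(\overline A))$ onto $\cV(I_2(\overline{A'}))$; restricting to the graphs of $\nu^*\s^3$ and $\s'^3$ yields an isomorphism of geometric pairs $(E,\nu^*\s^3)\cong(E',\s'^3)$, and in particular $|\nu^*\s^3|=|\s'^3|$. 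This establishes (1)$\Leftrightarrow$(2). For (2)$\Leftrightarrow$(3) I would again exploit the same Calabi--Yau replacement: if $\|\s\|=\|\s'\|<\infty$, then by Theorem \ref{q.main} the Calabi--Yau $A'$ is finite over its center, so $\Projn A\cong\Projn A'$ is finite over its center by Definition \ref{def_{Z(proj)}}; conversely, given a witness $A''$ finite over its center with $\Projn A\cong\Projn A''$, I would replace $A''$ by its Calabi--Yau companion $A''_{\mathrm{CY}}$, apply Theorem \ref{q.main} to get $\|\s''_{\mathrm{CY}}\|<\infty$, and conclude $\|\s\|=\|\s''_{\mathrm{CY}}\|<\infty$ via Lemma \ref{lem_Mo2}(2).

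For the \emph{moreover} statement, if $A$ is of Type T, T', CC, TL, or WL, the Calabi--Yau $A'$ is of the same type by Lemma \ref{lem_Mo2}(2), and Theorem \ref{q.main} directly gives $\|\s'\|=|\s'^3|=\infty$; hence $\|\s\|=\|\s'\|=\infty$, and since $\|\s\|\le|\s|$ we obtain $|\s|=\infty$, so $A$ cannot be finite over its center by Theorem \ref{thm_{ATV2}}. The principal obstacle in the overall argument is the algebra-to-geometry step $\overline A\cong\overline{A'}\Rightarrow|\nu^*\s^3|=|\s'^3|$: algebras satisfying only (G2) do not in general determine their geometric data uniquely, so care is needed to ensure that the induced $\PP^2$-isomorphism transports the graph of $\nu^*\s^3$ faithfully onto the graph of $\s'^3$ and therefore preserves the order of the automorphism.
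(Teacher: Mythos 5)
Your proposal follows essentially the same route as the paper: reduce to a Calabi--Yau companion $A'$ via Theorem \ref{Main2}, get $\|\s\|=\|\s'\|=|{\s'}^3|$ from Lemma \ref{lem_Mo2}(2) and Theorem \ref{q.main}, obtain $|{\s'}^3|=|\nu^*\s^3|$ from $\overline A\cong\overline{A'}$ via Lemma \ref{lem.ovl}, and handle the \emph{moreover} clause exactly as the paper does. The only difference is that where you hand-roll the step from $\overline A\cong\overline{A'}$ to $|\nu^*\s^3|=|{\s'}^3|$ (and rightly flag the (G2)-versus-(G1) subtlety), the paper simply invokes Lemma \ref{lem_Mo2}(1), so your argument is, if anything, slightly more explicit at the one point the paper glosses over.
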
 

\begin{proof} For every $3$-dimensional quantum polynomial algebra $A=\cA(E, \s)$, there exists a $3$-dimensional Calabi-Yau quantum polynomial algebra $A'=\cA(E', \s')$ such that $\grmod A\cong \grmod A'$ by Theorem \ref{Main2}. 
Since 
the Nakayama automorphism of $A'$ is the identity, $\cA(E, \nu^*\s^3)=\overline A\cong \overline {A'}=\cA(E', {\s'}^3)$ by Lemma \ref{lem.ovl}, so
$$||\s||=||\s'||=|{\s'}^3|=|\nu^*\s^3|$$ 
by Lemma \ref{lem_Mo2}
and Theorem \ref{q.main}.  Since $\Projn A$ is finite over its center if and only if $\Projn A'$ is finite over its center if and only if $||\s'||<\infty$ by Theorem \ref{q.main}, we have the equivalences (1) $\Leftrightarrow$ (2) $\Leftrightarrow$ (3).

If $A$ is a $3$-dimensional quantum polynomial algebra of Type T, T', CC, TL, WL, then $A'$ is of the same type by Lemma \ref{lem_Mo2}, so $||\s||=||\s'||=\infty$ by the proof of Theorem \ref{q.main}. It follows that $|\s|=\infty$,  
so $A$ is not finite over its center by Theorem \ref{thm_{ATV2}}.    
\end{proof} 


\section{An application to Beilinson algebras} 

We finally apply our results to representation theory of finite dimensional algebras.

\begin{defin}[\textnormal{\cite[Definition 2.7]{HIO}}]
Let $R$ be a finite dimensional algebra of ${\rm gldim} R=d<\infty$. We define an autoequivalence $\nu_d\in \Aut D^b(\mod R)$ by $\nu_d(M):=M\otimes _R^{\rm L}DR[-d]$ where $D^b(\mod R)$ is the bounded derived category of $\mod R$ and $DR:=\Hom_k(R, k)$.   We say that $R$ is {\it $d$-representation infinite} if $\nu_d^{-i}(R)\in \mod R$ for all $i\in \NN$.  In this case, we say that a module $M\in \mod R$ is {\it $d$-regular} if $\nu_d^i(M)\in \mod R$ for all $i\in \ZZ$.  
\end{defin}   

By \cite{Mi}, a $1$-representation infinite algebra is exactly the same as a finite dimensional hereditary algebra of infinite representation type. 
For representation theory of such an algebra, regular modules play an essential role.  

For a $d$-dimensional quantum polynomial algebra $A$, we define {\it the Beilinson algebra of $A$} by 
$$\nabla A:=\begin{pmatrix} A_0 & A_1 & \cdots & A_{d-1} \\
0 & A_0 & \cdots & A_{d-2} \\
\vdots & \ddots & \vdots & \vdots \\
0 & 0 & \cdots & A_0 \end{pmatrix}.$$
The Beilinson algebra is a typical example of $(d-1)$-representation infinite algebra by \cite[Theorem 4.12]{MM}. To investigate representation theory of such an algebra, it is important to classify simple $(d-1)$-regular modules.

\begin{cor} \label{cor.main} 
Let $A=\cA(E, \s)$ be a $3$-dimensional quantum polynomial algebra with the Nakayama automorphism $\nu\in \Aut A$.  Then the following are equivalent:
\begin{enumerate}[{\rm (1)}]
\item{} $|\nu^*\s^3|=1$ or $\infty$. 
\item{} $\Projn A$ has no fat point.
\item{} The isomorphism classes of simple $2$-regular modules over $\nabla A$ are parameterized by the set of closed points of $E\subset \PP^2$.  
\end{enumerate}
In particular, if $A$ is of Type P, T, T', CC, TL, WL, then $A$ satisfies all of the above conditions. 
\end{cor}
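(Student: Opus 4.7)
The plan is to derive Corollary \ref{cor.main} from Theorem \ref{q.nu}, Lemma \ref{lem.general}, and the standard parameterization of simple $2$-regular $\nabla A$-modules by points of $\Projn A$ via a Minamoto--Mori-type derived equivalence. For the equivalence (1) $\Leftrightarrow$ (2), Theorem \ref{q.nu} gives $\|\s\| = |\nu^*\s^3|$, so condition (1) is equivalent to $\|\s\| \in \{1, \infty\}$. Lemma \ref{lem.general}(2) characterizes fat points of $\Projn A$ by $1 < \|\s\| < \infty$, hence the non-existence of a fat point is exactly the condition $\|\s\| = 1$ or $\|\s\| = \infty$, which is (2).

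For (2) $\Leftrightarrow$ (3), I would invoke the derived equivalence $D^b(\mod \nabla A) \simeq D^b(\tails A)$ together with the fact that simple $2$-regular $\nabla A$-modules correspond under this equivalence (up to the cohomological shift built into the Beilinson construction) to points of $\Projn A$. Under this correspondence, the non-fat points of $\Projn A$ are precisely those arising from closed points of $E \subset \PP^2$ via point modules, while fat points contribute additional simple $2$-regular modules not of this form. Consequently, the simple $2$-regular $\nabla A$-modules are parameterized exactly by the closed points of $E$ if and only if $\Projn A$ has no fat point.

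For the "in particular" statement, I would argue type by type: Type P has $E = \PP^2$, so $\|\s\| = 1$ by Lemma \ref{lem.general}(1) and (1) holds. For Types T, T', CC, TL, WL, Theorem \ref{Main2} yields a Calabi-Yau companion $A' = \cA(E', \s')$ with $\grmod A \cong \grmod A'$; by Lemma \ref{lem_Mo2}, $A'$ is of the same type and $\|\s\| = \|\s'\|$, and the case-by-case analysis in the proof of Theorem \ref{q.main} shows $\|\s'\| = \infty$ for each of these five types. Hence (1) holds in all these cases.

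The main obstacle I anticipate is the (2) $\Leftrightarrow$ (3) step, since it requires importing the Minamoto--Mori correspondence and explicitly matching \emph{closed points of $E$} with \emph{non-fat points of $\Projn A$}; I would cite existing work on Beilinson algebras of AS-regular / quantum polynomial algebras for this identification rather than reprove it here. The remaining pieces are essentially formal consequences of results already assembled in the previous sections.
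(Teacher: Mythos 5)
Your proposal is correct and follows essentially the same route as the paper: the paper likewise obtains (1) $\Leftrightarrow$ (2) from Theorem \ref{q.nu} combined with Lemma \ref{lem.general}(2), and proves (2) $\Leftrightarrow$ (3) by citing the parameterization of simple $2$-regular $\nabla A$-modules by points of $\Projn A$ (\cite[Theorem 3.6]{Mo2}, which is exactly the Minamoto--Mori-type correspondence you invoke) together with the identification of the non-fat (ordinary) points with closed points of $E$ (\cite[Proposition 4.4]{Mo2}). Your type-by-type argument for the ``in particular'' clause matches what is already established in the proof of Theorem \ref{q.nu}.
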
 

\begin{proof} 
(1) $\Leftrightarrow$ (2): This follow from Theorem \ref{q.nu} and Lemma \ref{lem.general}. 


(2) $\Leftrightarrow$ (3): By \cite[Theorem 3.6]{Mo2}, 
isomorphism classes of simple 2-regular modules over $\nabla A$ are parameterized by the set of points of $\Projn A$.  On the other hand, it is well-known that the points of $\Projn A$ which are not fat (called \textit{ordinary points} in \cite{Mo2}) are parameterized by the set of closed points of $E$ (see \cite[Proposition 4.4]{Mo2}), hence the result holds. 

\end{proof} 

\begin{rem} We have the following characterization of Type P, T, T', CC, TL, WL.  Let $A=\cA(E, \s)$ be a $3$-dimensional quantum polynomial algebra.  Write $E=\cV(f)\subset \PP^2$ where $f\in k[x, y, z]_3$.  Recall that {\it the Hessian of $f$} is defined by $H(f):=\det \begin{pmatrix} f_{xx} & f_{xy} & f_{xz} \\ f_{yx} & f_{yy}  & f_{yz} \\ f_{zx} & f_{zy} & f_{zz} \end{pmatrix}\in k[x, y, z]_3$. Then $A$ is of Type P, T, T', CC, TL, WL if and only if $H^2(f):=H(H(f))=0$. 
\end{rem} 

\begin{rem} 
If $A$ is a 2-dimensional quantum polynomial algebra, 
then $\nabla A\cong \begin{pmatrix} k & k^2 \\ 0 & k \end{pmatrix}\cong 
k(\xymatrix{
\bullet \ar@<0.5ex>[r] 
\ar@<-0.2ex>[r]
& \bullet
})$, 
so $\nabla A$ is a finite dimensional hereditary algebra of tame representation type.  It is known that the isomorphism classes of simple regular modules over $\nabla A$ are parameterized by $\PP^1$ (cf. \cite [Theorem 3.19]{Mo2}).  For a 3-dimensional quantum polynomial algebra $A$, we expect that the following are equivalent: 
\begin{enumerate}
\item{} $\Projn A$ is finite over its center. 
\item{} $\nabla A$ is 2-representation tame in the sense of \cite{HIO}. 
\item{} The isomorphism classes of simple $2$-regular modules over $\nabla A$ are parameterized by $\PP^2$.
\end{enumerate}
These equivalences are shown for Type S in \cite [Theorem 4.17, Theorem 4.21]{Mo2}. 
\end{rem}

\proof[Acknowledgements]
The first author was supported by 
Grants-in-Aid for Young Scientific Research 18K13397 
Japan Society for the Promotion of Science. 
The second author was supported by 
Grants-in-Aid for Scientific Research (C) 20K03510 
Japan Society for the Promotion of Science 
and 
Grants-in-Aid for Scientific Research (B) 16H03923 
Japan Society for the Promotion of Science. 


\begin{thebibliography}{HD}  

\bibitem[AOU]{AOU}
T. ~Abdelgadir, S. ~Okawa and K.~Ueda, 
Compact moduli of noncommutative projective planes, 
preprint (arXiv:1411.7770).

 \bibitem[A]{A}
  M.~Artin, 
 Geometry of quantum planes, 
 Azumaya algebras, actions, and modules (Bloomington, IN, 1990), 
 1--15, \textit{Contemp. Math.}, \textbf{124}, Amer. Math. Soc., Providence, RI, 1992. 

%
 \bibitem[AZ]{AZ} 
  M.~Artin and J.~J.~Zhang, 
  Noncommutative projective schemes, 
  \textit{Adv. Math.} \textbf{109} (1994), no. 2, 228--287.
  
  \bibitem[ATV1]{ATV1}
   M.~Artin, J.~Tate and M.~Van~den~Bergh, 
   Some algebras associated to automorphisms of elliptic curves, 
   The Grothendieck Festschrift, vol. 1, 
   \textit{Progress in Mathematics} vol. 86 (Birkh\"auser, Basel, 1990) 33--85. 

  \bibitem[ATV2]{ATV2}
 \bysame, 
   Module over regular algebras of dimension $3$, 
   \textit{Invent. Math.} \textbf{106} (1991), no. 2, 335--388. 

  
%
%
%
%
%
%
%
%
%

\bibitem[HIO]{HIO}
M.~Herschend, O.~Iyama and S.~Oppermann, 
$n$-representation infinite algebras, 
\textit{Adv. Math.} \textbf{252} (2014), 292--342. 

 \bibitem[IM1]{IM1}
  A.~Itaba and M.~Matsuno, 
  Defining relations of $3$-dimensional quadratic AS-regular algebras, 
  \textit{Math. J. Okayama Univ.} \textbf{63} (2021), 61--86. 
  
  \bibitem[IM2]{IM2}
  \bysame, 
  AS-regularity of geometric algebras of plane cubic curves, 
  submitted (arXiv:1905.02502). 
  
%
   \bibitem[Ma]{Ma}
   M.~Matsuno, 
   A complete classification of $3$-dimensional quadratic AS-regular algebras of Type EC, to appear in \textit{Canad. Math. Bull.}, 
  (arXiv:1912.05167). 
  

\bibitem[Mi]{Mi}
H.~Minamoto,
Ampleness of two-sided tilting complexes,
\textit{Int. Math. Res. Not.} (2012),  no. 1, 67--101. 

\bibitem[MM]{MM}
H.~Minamoto and I.~ Mori,
The structure of AS-Gorenstein algebras,
\textit{Adv. Math.} \textbf{226} (2011), no. 5, 4061--4095. 

%
%
  
   \bibitem[Mo1]{Mo1} 
   I.~Mori,
   Non commutative projective schemes and point schemes, 
   Algebras, Rings and Their Representations, 
   World Sci., Hackensack, N.J., (2006), 215--239. 
   
   \bibitem[Mo2]{Mo2}
\bysame, 
   Regular modules over $2$-dimensional quantum Beilinson algebras of Type S, 
   \textit{Math. Z.} \textbf{279} (2015), no. 3--4, 1143--1174. 
%
%

   \bibitem[MU]{MU1} 
   I.~Mori and K.~Ueyama, 
   Graded Morita equivalences for geometric AS-regular algebras, 
   \textit{Glasg. Math. J.} \textbf{55} (2013), no. 2, 241--257.
  
%
%
%
%
%
\bibitem[S]{S}
S. P.~Smith, 
Noncommutative algebraic geometry, 
lecture notes, 
University of Washington, (1999). 

 

 

  \bibitem[U]{U}
K.~Ueyama, 
Graded Morita equivalences for generic Artin-Schelter regular algebras, 
\textit{Kyoto J. Math.} \textbf{51} (2011), no. 2, 485-501. 
  
   
%
%
%
%

   
\end{thebibliography}
\end{document}